\newtheorem{proposition}{Proposition}
\newtheorem{lemma}{Lemma}
\newtheorem{theorem}{Theorem}
\newtheorem*{proof}{Proof}
\newtheorem{corollary}{Corollary}
\newtheorem{remark}{Remark}
\newtheorem{definition}{Definition}
\title{Morse Index Classification and Landscape of Kuramoto System for Hebbian-based Binary Pattern Recognition}
 \author[a,b]{Xiaoxue Zhao}
 \author[c]{Xiang Zhou \thanks{Corresponding author: xizhou@cityu.edu.hk}}
  \affil[a]{School of Mathematics, Harbin Institute of Technology, Heilongjiang, China}
 \affil[b]{Department of Data Science, City University of Hong Kong, Hong Kong SAR}
  \affil[c]{Department of Mathematics, City University of Hong Kong, Hong Kong SAR}
 \date{}
\begin{document}

\maketitle

\begin{abstract}
   This study examines the Kuramoto model with a Hebbian learning rule and second-order Fourier coupling for binary pattern recognition. The system stores memorized binary patterns as stable critical points, enabling it to identify the closest match to a defective input. 
   However, the system exhibits multiple stable states and thus the dynamics are influenced by saddle points and other unstable critical points, which may disrupt convergence and recognition accuracy. 
   We systematically classify the stability of these critical points by analyzing the Morse index, which quantifies the stability of critical points by   the number of unstable directions.
   The index-1 saddle point is highlighted as the transition state on the energy landscape of the Kuramoto model.  These findings provide deeper insights into the stability landscape of the Kuramoto model than the stable equilibria, enhancing its theoretical foundation for binary pattern recognition.\\
    \par \textbf{Keywords:} Kuramoto system, 
    binary pattern recognition, saddle point, Morse index, energy landscape.
\end{abstract}


{\section{Introduction}}
The Kuramoto model  is a foundational framework for studying synchronization and collective dynamics in complex systems\cite{kuramoto1984chemical}, with applications ranging from neural networks to power grids and ecological systems
\cite{dong2013synchronization,dorfler2011critical,ha2013formation,choi2021asymptotic}. In recent developments, the model has been extended by incorporating the Hebbian learning rule and second-order Fourier coupling, enabling its application to binary pattern recognition\cite{nishikawa2004oscillatory,nishikawa2004capacity}.
The Hebbian learning rule, proposed by the Canadian psychologist Donald Olding Hebb in 1949 \cite{hebb1949the},  has its core idea as ``the synchronous activation between neurons will strengthen the connections between them".
As a cornerstone of neuroscience,  the Hebbian learning rule  governs how synaptic connections adapt based on neuronal activity, enabling the storage and retrieval of patterns \cite{Hopfield1982neural,Hopfield1984neurons,Duannurev2025}. By integrating this rule into the Kuramoto model, the system is capable of storing binary patterns and retrieving the closest match to a defective input, mimicking memory and recognition processes in neural systems\cite{hölzel2015stability}.

As an example of  the Kuramoto model with the Hebbian learning rule for the binary pattern recognition,  we  consider   the following 
system \cite{hölzel2015stability,nishikawa2004oscillatory,nishikawa2004capacity} consisting of $N$ Kuramoto-type oscillators, where the equation of motion for the $i$-th oscillator is
\begin{equation}\label{mod}
{\dot \varphi}_i =\frac{1}{N}\sum_{j=1}^{N}
C_{ij}\sin(\varphi_{j} - \varphi_i)+\frac{\varepsilon}{N}\sum_{j=1}^{N}\sin2(\varphi_{j}-\varphi_{i}) , ,, i\in [N]:=\{1, 2, \dots, N\}.
\end{equation}
Here, the state \(\varphi_i\) is a function of time $t$ and  \(\dot{\varphi}_i\) is its time derivative.
$\varepsilon$ is a positive constant. 
The connection weight matrix is an application of the Hebbian learning rule:
\begin{equation}
\label{C} C_{ij}:=\sum_{k=1}^{M}\xi_{i}^{k}\xi_{j}^{k},\;\; i,j\in [N]\quad \text{i.e.,}\quad \boldsymbol{C}:=[C_{ij}]=\sum_{k=1}^{M}\boldsymbol{\xi}^k(\boldsymbol{\xi}^k)^{\top},
\end{equation}
which encodes the $M$  {\it memorized (stored) binary patterns} $\{\boldsymbol{\xi}^1,\boldsymbol{\xi}^2,\dots,\boldsymbol{\xi}^M\}\subset\{-1,1\}^N$, where $\xi^k_i$ is the $i$-th component of $\boldsymbol{\xi}^k$,  $k\in [M]:=\{1,2,\dots,M\}$ and $i\in [N]$.   \(\boldsymbol{C}\) is a symmetric matrix and does not change when \(\boldsymbol{\xi}^k\) is replaced by \(-\boldsymbol{\xi}^k\). Any two   binary patterns $\boldsymbol{\xi}^k$ and $\boldsymbol{\xi}^l$  are called {\it distinct} if   $\boldsymbol{\xi}^k\neq \boldsymbol{\xi}^l$  
and  $\boldsymbol{\xi}^k\neq -\boldsymbol{\xi}^l$.
Binary patterns other than memorized binary patterns are called {\it non-memorized binary patterns}.

In system \eqref{mod},  the $M$   stored patterns $(\boldsymbol{\xi}^k)_{k\in [M]}$ are given for retrieval. When an input signal is not far away from one of the $M$ stored patterns, for example,
a noisy perturbation of $\boldsymbol{\xi}^k$, the network system \eqref{mod}   converges to a stable state encoding $\boldsymbol{\xi}^k$, e.g.,  $\arccos( \boldsymbol{\xi}^k)$,  via phase locking, thus achieving pattern retrieval. 
For the   model 
without  the second order term, \(\varepsilon = 0\),
if the  $M$ memorized patterns are mutual orthogonal,   the   steady-states of  \eqref{mod}  for these patterns are degenerate attractive limit sets, indicating  structural instability
\cite{hölzel2015stability}; in contrast, 
without this orthogonality condition, the memorized patterns become unstable whenever $M>2$ \cite{AonishiPRE1998}.

The second-order Fourier term \(\sin 2(\varphi_j - \varphi_i)\)  is necessary for   the system to support  isolated multi-stable   states  that correspond to the memorized binary patterns, which is crucial for pattern recognition \cite{nishikawa2004capacity,nishikawa2004oscillatory,zhao2020stability}. This second-order term typically  enhances the stability of critical points by modifying the system's energy landscape to ensure that the stored patterns   are stable and can be reliably retrieved by choosing the proper value of $\varepsilon$, meanwhile 
it also influences the Morse index of all critical points, which determines their role in the system's dynamics (e.g., attractors vs. saddle points).
For example,  when $\varepsilon$ is sufficiently large, {\it all} binary patterns, both memorized and non-memorized, are asymptotically stable, regardless of mutual orthogonality condition \cite[Theorem 3.2]{zhao2020stability} ---  an undesired scenario  to   retrieve   only the memorized patterns. 
And if the mutual orthogonality condition fails, certain  memorized binary patterns may lose the stability for  a small   $\varepsilon$ \cite{li2022hebbian}. 
Therefore,  it is important to use the proper size of $\varepsilon$ \cite{nishikawa2004oscillatory}
or  impose  the orthogonality conditions of $(\boldsymbol{\xi}^k)$. However, in general, it is impossible to rule out the existence of stable, non-memorized binary patterns when $M > 2$. Refer to \cite{zhao2020stability,li2022hebbian,zhao2023unified,zhao2025first, zhao2025binary} for more details.

 The existing  works mentioned above have extensively studied how to characterize the stable equilibria, and some of them \cite{hölzel2015stability,zhao2020stability} extended to study the points 
on the line between these binary patterns.
However, since the system exhibits multiple stable states, 
the associative memory mechanism  critically relies on the basins
of attraction of distinct stable memories for the input signal.
Therefore, besides identifying  the stable equilibria,
it is significant  to  further 
take a holistic view about the energy landscape \cite{Energylanscapes,Energylanscapes2024ReviewJCP} of the system \eqref{mod}, particularly by analyzing the roles of non-stable critical points, such as saddle points and unstable equilibria in shaping the basin of attraction and the system's dynamics.  The Kumamoto oscillators 
\eqref{mod}, as well as many other examples in physics, chemistry\cite{Energylanscapes}, and deep learning \cite{LossPDE2023,TUPNAS2024Review},
poses   multiple stable critical points (or local minimum points)
and exhibits very rich dynamics in various settings, ranging from phase transition\cite{npjLeiZhang2016},  noise-induced transitions\cite{FW2012} to global optimization \cite{SAOPT}.
Even though a multitude of advanced algorithms for the exploration and exploitation of such landscapes have been developed (e.g. \cite{String2002,GAD2011,yin2019high,TPT_review,Energylanscapes2024ReviewJCP}), to theoretically study specific models remains a challenging task.  For example, to explore the basin of attraction for the oscillatory system \eqref{mod} for each stable critical point is specially demanding and most works are based on numerical experiments\cite{nishikawa2004oscillatory}.

The unstable critical points are not merely mathematical artifacts; they play a pivotal role in determining the system's trajectories by acting as transitional states or barriers. 
The prevailing concept of {\it transition state}     on an energy landscape in chemistry \cite{Erying,Wigner,Pollak2005Rev} is the lowest-energy point on the basin  boundary, representing the reaction barrier from reactants to products. It is an unstable critical state with Morse index 1, connecting two neighboring stable equilibria (``reactant'' and ``product'') by its one-dimensional unstable manifold.  Its implication in the stochastic dynamics can be found in \cite{FW2012} for rigorous justification.
These concepts about energy landscape and transition states can also benefit our outstanding of the models
for associative memory.  For example, saddle points can influence whether the system successfully retrieves a stored pattern or diverges to an incorrect state.

In this paper, our objective is to conduct a comprehensive study on the stability and Morse index classification of the critical points of the system  \eqref{mod}, with arbitrary number of oscillators $N$ and two memorized binary patterns, i.e., $M=2$. 
The reason for $M=2$  is that
even  when $M$ is as small as $3$,
without assuming the orthogonality condition, 
the system \eqref{mod} can not maintain the stability of the memorized binary patterns at a small $\varepsilon$, and meanwhile generate more and more {\it stable} non-memorized binary-pattern critical points as $\varepsilon$ increases. Refer to Section \ref{sec2} and \cite{li2022hebbian,L-Z-Z}.
In addition,  there is a practical strategy \cite{zhao2023unified,zhao2025first} of reducing the recognition process of $M$ memorized binary patterns to $M/2$ groups of recognition processes for two memorized patterns.
By focusing solely on $M = 2$ memories, without imposing the orthogonality constraint between them, we can achieve a highly flexible model while maintaining theoretical clarity.
We summarize  our main contributions below.
\begin{itemize}
	\item 
	For any 
	(unstable) non-memorized binary patterns, $\boldsymbol{\varphi}\in \{0,\pi\}^N$, 
	Theorem \ref{thm1} demonstrates that we can completely determine its Morse index by examining only the cardinality of certain (component) index sets which is straightforward  to count.
	
	\item 
	By expanding the binary patterns to ternary patterns, where $\boldsymbol{\varphi} \in \{0, \pi/2, \pi\}^N$, we provide a necessary and sufficient condition for critical points (Proposition \ref{prop}), derive the spectrum (Theorem \ref{thm-mid}), and establish that the index-$1$ saddle point corresponds to the midpoint of two memories (Theorem \ref{Morthm1}).
	\item We illustrate our results by an example of an $N=6$ oscillator exhibiting   bi-stability,  identify all  Morse index-1 saddle points, and find the transition state with the lowest energy.
\end{itemize}

This paper is organized as follows. In Section \ref{sec2}, basic concepts and preliminaries of the Kuramoto system \eqref{mod} are introduced.
Section \ref{subsec3.2} and Section \ref{sec-mid} deal with critical points of binary patterns and ternary patterns, respectively.  Section \ref{sec5} demonstrates the results of the first two parts through an example.
Finally, a brief conclusion is given in Section \ref{secon}.

\section{Preliminaries}\label{sec2}

This section introduces the energy function,   critical points and the Morse index, and establishes the mathematical framework and theoretical tools required for the study. 

\subsection{Energy function, critical points, Morse index}
Let $\boldsymbol{\varphi}=(\varphi_1,\varphi_2,\dots,\varphi_N)^{\top}$ be the collection of totally $N$ oscillators.
The connection weight matrix $\boldsymbol{C}$ is symmetric, so  \eqref{mod} is a first-order gradient flow $\dot{\boldsymbol{\varphi}}=-\nabla V(\boldsymbol{\varphi})$ associated with the    potential energy  
\begin{equation} \label{V}
V(\boldsymbol{\varphi})=-\frac{1}{2N}\sum_{i,j=1}^{N}C_{ij}\cos(\varphi_j-\varphi_i)-\frac{\varepsilon}{4N}\sum_{i,j=1}^{N}\cos 2(\varphi_j-\varphi_i).
\end{equation} 
$V$ is an even function $V(\boldsymbol{\varphi})=V(-\boldsymbol{\varphi})$ and   satisfies the translation invariance $V(\boldsymbol{\varphi})=V(\boldsymbol{\varphi}+\delta \mathbbm{1} )$ for any real number $\delta$, where 
\(\mathbbm{1}=(1,1,\dots,1)^{\top}\in\mathbb{R}^N\).
So $\nabla V(\boldsymbol{\varphi}) \cdot \mathbbm{1}\equiv 0$ and consequently the flow \eqref{mod} satisfies 
$\sum_{i=1}^N {\varphi}_i (t) \equiv \sum_{i=1}^N {\varphi}_i (0)$ for all $t$.
If $\boldsymbol{\varphi}(t)$ satisfies  \eqref{mod}, then
$-\boldsymbol{\varphi}(t)$ satisfies  \eqref{mod} too. Thus, if any point $\boldsymbol{\varphi}\in \mathbb{R}^N$
is a critical point, then 
$-\boldsymbol{\varphi}$, as well as  $ \boldsymbol{\varphi} \operatorname{mod} 2\pi$,
is also a critical point.

Let  \(\boldsymbol{f}(\boldsymbol{\varphi})=(f_1(\boldsymbol{\varphi}),f_2(\boldsymbol{\varphi}),\dots,f_N(\boldsymbol{\varphi})^{\top}\in \mathbb{R}^N\) be 
the right-hand side of system \eqref{mod}. 
The Jacobian matrix $\boldsymbol{\mathcal{J}}_{\boldsymbol{\varphi}}=[\mathcal{J}_{ij}]$ of \eqref{mod}, which is the negative Hessian matrix of the potential function $V$, at an arbitrary point \(\boldsymbol{\varphi}\), is:
\begin{align}
\mathcal{J}_{ij}&:=\frac{f_i(\boldsymbol{\varphi})}{\partial \varphi_j}=\frac{1}{N}C_{ij}\cos(\varphi_j-\varphi_i)+\frac{2\varepsilon}{N}\cos2(\varphi_j-\varphi_i),\quad i,j\in [N] \;\; \text{and}\;\; i\neq j, \label{Jaco1}\\
\mathcal{J}_{ii}&:=\frac{f_i(\boldsymbol{\varphi})}{\partial \varphi_i}=-\frac{1}{N}\sum_{j=1,j\neq i}^{N}C_{ij}\cos(\varphi_j-\varphi_i)- \frac{2\epsilon}{N} \sum_{j=1,j\neq i}^{N}\cos2(\varphi_j-\varphi_i) ,\quad i\in [N].  \label{Jaco2}
\end{align}
There is a trivial zero eigenvalue and the eigenvector $\mathbbm{1}$, i.e., 
\(\boldsymbol{\mathcal{J}}_{\boldsymbol{\varphi}}\cdot\mathbbm{1}=0\cdot\mathbbm{1}\); this is  the consequence of  the translational invariance. 

As stated in \cite[Lemma 2.4]{monzon2005global}, if the zero eigenvalue is simple, then we can analyze the stability of the critical points of system \eqref{mod} through the spectrum of the Jacobian matrix.

We next define the Morse index and stability of critical points  of system \eqref{mod}, by excluding the trivial zero eigenvalue with the consensus 
direction $\mathbbm{1}$.
\begin{definition}\label{def1}
	A critical point \(\boldsymbol{\varphi}^*=(\varphi_1^*,\varphi_2^*,\dots,\varphi_N^*)^{\mathrm{T}}\) of system \eqref{mod} is said to be of  (Morse) index-$k$ if the number of positive eigenvalues of the Jacobian matrix at  \(\boldsymbol{\varphi}^*\) is $k$.  It is referred to as attractive or asymptotically stable if its Morse index is zero and the zero eigenvalue is simple, i.e., 
	all remaining $N-1$ eigenvalues are negative.
\end{definition}

The Morse index is the dimension of the largest subspace of the tangent space 
at a critical point on which the Hessian is negative definite (i.e, the Jacobian is positive definite).
When \(0 < k < N - 1\), the critical point \(\boldsymbol{\varphi}^*\) is a saddle point of system \eqref{mod}. For $k=0$ (or $N-1$), \(\boldsymbol{\varphi}^*\) is a minimum (or maximum) point of the energy function of the system \eqref{V}.  The translation $\boldsymbol{\varphi}^*\to \boldsymbol{\varphi}^* + \delta \mathbbm{1}$ does not change the  Morse index.

\subsection{Binary-pattern critical points of Kumamoto oscillators   }
There are  $2^N$ special  fixed-point solutions to the system \eqref{mod} corresponding to all possible binary patterns of length $N$.
Let  $\boldsymbol{\eta}=(\eta_{1},\eta_{2},\dots,\eta_{N})^T$ be an any $N$-dimensional vector of $1$'s and $-1$'s representing a binary pattern, then there is 
a  critical point  of  \eqref{mod} corresponding to $\boldsymbol{\eta}$, satisfying 
\begin{equation}\label{Hebbian}
\left|\varphi_{i}^*(\boldsymbol{\eta})-\varphi_{j}^*(\boldsymbol{\eta})\right|=\begin{cases}0, & \text{ if }\eta_{i}=\eta_{j};\\\pi,& \text{ if }\eta_{i}\ne\eta_{j}. \end{cases}
\end{equation}
The condition \eqref{Hebbian} is sufficient to make the right hand side of \eqref{mod} vanish, so it is a critical point encoding  the    binary pattern $\boldsymbol{\eta}$.
Among all points by  translational invariance (global phase shift), 
we denote the   solution of \eqref{Hebbian} in   $ \{0,\pi\}^N$
by 
$\boldsymbol{\varphi}^*(\boldsymbol{\eta})=(\varphi_{1}^*(\boldsymbol{\eta}),\varphi_{2}^*(\boldsymbol{\eta}),\dots,\varphi_{N}^*(\boldsymbol{\eta}))^{\top}$
where  
$$\varphi^*_i(\boldsymbol{\eta}):=\arccos \eta_i$$ for $i\in [N]$. 
Note that 
$ \arccos (-\boldsymbol{\eta})=\pi-\arccos \boldsymbol{\eta}$
is also a solution of \eqref{Hebbian} in   $ \{0,\pi\}^N$.   
Any point in   $ \{0,\pi\}^N$ (subject to translation invariance)  is referred  to     as the {\it binary-pattern critical point} of \eqref{mod}; 
and  the associated binary pattern can be  reconstructed by 
$\boldsymbol{\eta}=\cos(\boldsymbol{\varphi})$
or $\boldsymbol{\eta}=-\cos(\boldsymbol{\varphi})$.
We do not need to distinguish the binary-pattern critical points
$ \arccos\boldsymbol{\eta}$ and  $ \arccos(-\boldsymbol{\eta})$, as there is no need to distinguish a binary pattern $\boldsymbol{\eta}$ and its inverted image
$-\boldsymbol{\eta}$. 

So, there are effectively a total of  \(2^{N - 1}\)  binary-pattern critical points of   \eqref{mod} taking values in the space  $\{0, \pi\}^N$,   by choosing the first component  $\varphi^*_1(\boldsymbol{\eta})=0$ without loss of generality. 
We   exchange the use of a binary pattern $\boldsymbol{\eta}$ and its corresponding critical point $\boldsymbol{\varphi}^*(\boldsymbol{\eta})$, whenever there is no ambiguity.

\begin{definition}\label{def2}
	A binary pattern \(\boldsymbol{\eta}\) is said to be of index-$k$ if the binary-pattern critical point $\boldsymbol{\varphi}^*(\boldsymbol{\eta})$ of system \eqref{mod} that satisfies equation \eqref{Hebbian} corresponding to $\boldsymbol{\eta}$ is of index-$k$.
\end{definition}

At a binary-pattern critical point, the energy function $V$ takes the form 
$    V(\boldsymbol{\varphi}^*(\boldsymbol{\eta}))=-\frac{1}{2N}\sum_{k=1}^M  (\boldsymbol{\eta} 
\cdot \boldsymbol{\xi}^k)^2-\frac{N\varepsilon}{4}$.
And the Jacobian matrix becomes 
$\mathcal{J}_{ij}  =\frac{1}{N} \sum_{k=1}^M \xi^k_i\xi^k_j\eta_i\eta_j  +\frac{2\varepsilon}{N}$ and $
\mathcal{J}_{ii}=-\frac{1}{N}\sum_{j=1,j\neq i}^{N} \sum_{k=1}^{M}\xi^k_i\xi^k_j\eta_i\eta_j + \frac{2\varepsilon}{N}- 2\varepsilon$ for $i,j\in [N]$.
For any binary-pattern critical point, 
if $\lambda$ is a non-zero eigenvalue of the symmetric matrix $\mathcal{J}^0$ associated with    $\varepsilon=0$, then $\lambda-2\varepsilon$ becomes  an eigenvalue of the Jacobian matrix $\mathcal{J}$ for any $\varepsilon$ \cite[Theorem 2]{nishikawa2004oscillatory}. 


\subsection{Stability and spectrum of   binary-pattern critical points}\label{subsec-binary}
If the stored $M$ binary patterns of system \eqref{mod} are {\it orthogonal} to each other, i.e.$\sum_{i} \xi^k_i \xi^l_i=N\delta_{kl}$,  their Morse indices are zero for $\varepsilon>0$. In fact, the following result \cite[Theorem 4.1]{zhao2020stability}   is  for any real number $\varepsilon$.

\begin{displayquote}
	Let $M\ge 2$ and the
	$M$ memorized binary patterns $\{\boldsymbol{\xi}^k\}_{k\in [M]}$ in \eqref{mod}   be mutually orthogonal. Let $\varepsilon$ 
	be any real number. Then for each $k\in [M]$, at each stored  binary-pattern critical point  $\boldsymbol{\varphi}^*(\boldsymbol{\xi}^k)$ defined in \eqref{Hebbian},  the spectrum of the Jacobian matrix is 
	\[\underbrace{-1-2\varepsilon,\dots,-1-2\varepsilon}_{N-M}, \;\;\underbrace{-2\varepsilon,\dots,-2\varepsilon}_{M-1},\;\;0. \]
\end{displayquote}
So, if $\varepsilon>0$, then $\boldsymbol{\varphi}^*(\boldsymbol{\xi}^k)$ is of index-$0$ and asymptotically stable for all $k\in [M]$.
This result   clearly  demonstrates the importance of the second order Fourier term in \eqref{mod}, which can guarantee the 
asymptotical stability of the stored $M$ patterns when $\varepsilon>0$\cite{nishikawa2004oscillatory}. 
Under the same orthogonality condition, the stability of other non-memorized binary pattern depends on the size of $\varepsilon$ and the ``distance'' to the $M$ stored binary patterns and 
in many cases, the  non-memorized binary patterns can be also stable for   $\varepsilon>0$ \cite[Theorem 4.2]{zhao2020stability}.

But the limitation of these results in \cite{hölzel2015stability, zhao2020stability} is the orthogonality condition of the stored binary patterns, which is not easy to satisfy  in most applications. If we assume no such condition, then the memorized binary-pattern critical points appear to be typically unstable when $\varepsilon=0$ \cite{AonishiPRE1998}. Even though  
a positive $\varepsilon$ may help increase the error-free  capacity, which is the ratio between the number of stable stored patterns  and the number of neurons $N$,   the stability of memorized 
binary-pattern can not be maintained 
for every $\varepsilon>0$ if $M>2$.
More detailed and comprehensive  empirical investigation can be found in \cite{nishikawa2004capacity,nishikawa2004oscillatory, 6879338}.

We have established some rigorous   results  when $M=2$  in our previous work \cite{L-Z-Z} and review them  below, which will be also used in Section \ref{subsec3.2}.  These theoretic results are restricted to $M=2$, but the   orthogonality condition is not required. 
From here on, we shall only focus on $M=2$ with two arbitrary distinct stored memories  $\{\boldsymbol{\xi}^1,\boldsymbol{\xi}^2\} $ in system \eqref{mod}.

We partition the whole   set $[N]=\{1,2,\cdots,N\}$ of  the $N$ oscillators into two subsets, labeled by 
$I_1$ and $I_2$:  
\begin{equation}\label{index}I_1:=\left\{i\in  [N]\,\mid\, \xi^1_i=\xi^2_i\right\}\,\,\, \text{and} \,\,\, I_2:=\left\{i\in [N]\,\mid\, \xi^1_i\neq\xi^2_i\right\}.\end{equation}
$I_1\cup I_2=[N]$ and $I_1 \cap I_2=\emptyset$.
Moreover, by $\boldsymbol{\xi}^1\neq \pm\boldsymbol{\xi}^2$, we have $I_1\neq \emptyset$ and $I_2\neq \emptyset$, where  $|I_1|$ refers to the cardinality of a set $I_1$. Then  $|I_1|=N-|I_2|$,  taking values in 
$\{1,2,\ldots, N-1\}$.
For two orthogonal binary patterns, 
$|I_1|=|I_2|=N/2$. 
So the distance from $|I_1|$ (or $|I_2|$) to $N/2$ is a heuristic  indicator of how far away these  two binary patterns.

By defining a critical second-order coupling strength \begin{equation}\label{epstar}
\varepsilon^*:=\min\Big\{\frac{|I_1|}{N},\frac{|I_2|}{N}\Big\},
\end{equation}
which is less than or equal $1/2$,
the following theorem \cite[Theorem 2.1]{L-Z-Z} shows the condition of $\varepsilon$ for  the stability of memorized    and non-memorized binary patterns.

\begin{theorem}(\cite{L-Z-Z})\label{lem2}
	Let $\{\boldsymbol{\xi}^1,\boldsymbol{\xi}^2\}\subset\{-1,1\}^N$ be two memories in   \eqref{mod} {satisfying $1<|I_1|<N-1$}. 
	\begin{enumerate}
		\item[$(1)$] If $\varepsilon>0$,  then the memorized patterns $\pm\boldsymbol{\xi}^1$ and $\pm\boldsymbol{\xi}^2$ are asymptotically stable.
		\item[$(2)$]  If $0<\varepsilon<\varepsilon^*$,  then all   binary-pattern critical points
		are  unstable except for  $ \pm\boldsymbol{\xi}^1$ and $\pm\boldsymbol{\xi}^2$; 
		\item[$(3)$]  If $\varepsilon> \varepsilon^*$,  then there exists at least a   non-memorized binary pattern    $ \boldsymbol{\eta}\in \{-1,1\}^N\setminus\{\pm\boldsymbol{\xi}^1,\pm\boldsymbol{\xi}^2\}$ that is asymptotically stable.
	\end{enumerate}
\end{theorem}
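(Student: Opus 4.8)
The plan is to reduce all three claims to a single scalar condition on the top eigenvalue of the Jacobian at $\varepsilon=0$, and then exploit the uniform spectral shift produced by the second-order term. First I would fix a binary-pattern critical point $\boldsymbol{\varphi}^*(\boldsymbol{\eta})$ and perform the gauge substitution $u_i^k:=\xi_i^k\eta_i\in\{-1,1\}$, writing $\boldsymbol{u}^k=(u_i^k)_i$. With $A:=\boldsymbol{u}^1(\boldsymbol{u}^1)^\top+\boldsymbol{u}^2(\boldsymbol{u}^2)^\top$ one checks that $N\boldsymbol{\mathcal{J}}^0=A-\operatorname{diag}(A\mathbbm{1})$ is a signed graph Laplacian, and that for general $\varepsilon$ one has the identity $N\boldsymbol{\mathcal{J}}=N\boldsymbol{\mathcal{J}}^0+2\varepsilon(\mathbbm{1}\mathbbm{1}^\top-NI)$. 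Since $\boldsymbol{\mathcal{J}}^0$ leaves $\mathbbm{1}^\perp$ invariant and $\mathbbm{1}\mathbbm{1}^\top-NI$ acts there as the scalar $-NI$, the two matrices are simultaneously diagonalizable with common eigenvector $\mathbbm{1}$; hence, writing $\mu_{\max}$ for the largest eigenvalue of $N\boldsymbol{\mathcal{J}}^0$ restricted to $\mathbbm{1}^\perp$, the point is asymptotically stable iff $\mu_{\max}<2N\varepsilon$ and unstable iff $\mu_{\max}>2N\varepsilon$. This extends the $\lambda\mapsto\lambda-2\varepsilon$ shift quoted after \eqref{Hebbian} to the zero eigenvalues as well, and turns the theorem into the task of locating $\mu_{\max}$.

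Next I would diagonalize $N\boldsymbol{\mathcal{J}}^0$ through the equitable partition of $[N]$ into the four classes on which $(u_i^1,u_i^2)$ is constant, of sizes $a,b,c,d$ for the signs $(+,+),(+,-),(-,+),(-,-)$. Because each $\boldsymbol{u}^k$ is constant on every class, any vector with zero mean on each class lies in $\ker A$; such within-class vectors are therefore eigenvectors of $N\boldsymbol{\mathcal{J}}^0$ with eigenvalues $2(d-a),\,2(c-b),\,2(b-c),\,2(a-d)$ and multiplicities $a-1,\,b-1,\,c-1,\,d-1$. On the complementary space of class-constant vectors the operator reduces to a quotient matrix that splits into the two decoupled blocks $\left(\begin{smallmatrix}2d&-2d\\-2a&2a\end{smallmatrix}\right)$ and $\left(\begin{smallmatrix}2c&-2c\\-2b&2b\end{smallmatrix}\right)$, with spectra $\{0,\,2(a+d)\}=\{0,\,2|I_1|\}$ and $\{0,\,2(b+c)\}=\{0,\,2|I_2|\}$. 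The crucial point is that the positive value $2|I_1|$ occurs only when both $a,d\ge1$, and $2|I_2|$ only when both $b,c\ge1$, since an empty class must be dropped from the quotient. Translating back, $a,d$ count the coordinates of $I_1$ on which $\boldsymbol{\eta}$ agrees, respectively disagrees, with $\boldsymbol{\xi}^1$, while $b,c$ count the coordinates of $I_2$ on which $\boldsymbol{\eta}$ follows $\boldsymbol{\xi}^1$, respectively $\boldsymbol{\xi}^2$.

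The structural heart of the argument is then the observation that $\boldsymbol{\eta}$ is \emph{pure} on both $I_1$ and $I_2$ (one of $a,d$ and one of $b,c$ vanishes) exactly when $\boldsymbol{\eta}\in\{\pm\boldsymbol{\xi}^1,\pm\boldsymbol{\xi}^2\}$. For these memorized patterns the quotient contributes no positive eigenvalue, while every within-class eigenvalue equals $-2|I_1|$ or $-2|I_2|$; thus $\mu_{\max}=0$, and after the shift the spectrum of $\boldsymbol{\mathcal{J}}$ is $0$ together with $N-1$ strictly negative values whenever $\varepsilon>0$, which is claim $(1)$. For any non-memorized $\boldsymbol{\eta}$ at least one of the conditions $a,d\ge1$ or $b,c\ge1$ must hold, so the quotient forces $\mu_{\max}\ge 2|I_1|$ or $\mu_{\max}\ge 2|I_2|$, hence $\mu_{\max}\ge 2\min\{|I_1|,|I_2|\}=2N\varepsilon^*$. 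For $0<\varepsilon<\varepsilon^*$ this gives $\mu_{\max}>2N\varepsilon$, so every such point is unstable, which is claim $(2)$.

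For claim $(3)$ I would assume without loss of generality $|I_2|\le|I_1|$, so $\varepsilon^*=|I_2|/N$, noting that the hypothesis $1<|I_1|<N-1$ forces both $|I_1|,|I_2|\ge2$. As a witness I take the $\boldsymbol{\eta}$ that equals $\boldsymbol{\xi}^1$ on all of $I_1$ and is genuinely mixed on $I_2$, choosing $b$ coordinates of $I_2$ to follow $\boldsymbol{\xi}^1$ and the remaining $c=|I_2|-b$ to follow $\boldsymbol{\xi}^2$ with $1\le b\le|I_2|-1$. Then $d=0$ suppresses the large eigenvalue $2|I_1|$, the only positive quotient eigenvalue is $2|I_2|=2N\varepsilon^*$, and the within-class eigenvalues are $-2|I_1|$ and $\pm2(b-c)$ with $|b-c|<|I_2|$; consequently $\mu_{\max}=2N\varepsilon^*$ exactly, and this $\boldsymbol{\eta}$ is distinct from $\pm\boldsymbol{\xi}^1,\pm\boldsymbol{\xi}^2$. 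For $\varepsilon>\varepsilon^*$ every shifted eigenvalue except the trivial $0$ is strictly negative, so the witness is asymptotically stable, proving claim $(3)$. I expect the main obstacle to be the clean evaluation of the quotient spectrum while correctly discarding empty classes: it is precisely this empty-class bookkeeping that distinguishes memorized from non-memorized patterns and pins $\mu_{\max}$ to the threshold $2N\varepsilon^*$, whereas the remaining multiplicity accounting is routine.
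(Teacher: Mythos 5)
Your proposal is correct and, notably, it actually \emph{proves} the statement, whereas the paper does not: Theorem \ref{lem2} is imported wholesale from \cite{L-Z-Z}, and the paper only records the underlying spectral data later, as the eigenvalue list \eqref{eq:8eig} quoted in the proof of Theorem \ref{thm1}. Your gauge substitution $u_i^k=\xi_i^k\eta_i$ together with the equitable partition into the four sign classes reproduces exactly that list under the dictionary $a=|I_{11}|$, $d=|I_{12}|$, $b=|I_{21}|$, $c=|I_{22}|$: the within-class eigenvalues $2(d-a),\,2(a-d),\,2(c-b),\,2(b-c)$ with multiplicities $a-1,\,d-1,\,b-1,\,c-1$ are the first two entries of each line of \eqref{eq:8eig} (times $N$, before the shift), the quotient eigenvalues $2|I_1|$ and $2|I_2|$ are the third entries with their existence conditions $a,d\ge 1$ and $b,c\ge 1$, and the two residual quotient zeros account for the simple eigenvalues $0$ and $-2\varepsilon$. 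The identity $N\boldsymbol{\mathcal{J}}=N\boldsymbol{\mathcal{J}}^0+2\varepsilon(\mathbbm{1}\mathbbm{1}^{\top}-NI)$ cleanly extends the $\lambda\mapsto\lambda-2\varepsilon$ shift (which the paper cites only for nonzero $\lambda$ of $\mathcal{J}^0$) to all of $\mathbbm{1}^{\perp}$, which is exactly what is needed to handle the memorized patterns in claim $(1)$. The two genuinely structural inputs you supply are both correct: purity on both $I_1$ and $I_2$ characterizes $\{\pm\boldsymbol{\xi}^1,\pm\boldsymbol{\xi}^2\}$, which drives claims $(1)$ and $(2)$; and the hypothesis $1<|I_1|<N-1$ forces $|I_2|\ge 2$, so the mixed witness for claim $(3)$ exists, has $\mu_{\max}=2N\varepsilon^*$ exactly (since $|b-c|<|I_2|$), and is distinct from the four memorized points. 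What your route buys is a self-contained argument with explicit eigenvectors and a transparent reason why $\varepsilon^*=\min\{|I_1|,|I_2|\}/N$ is the sharp threshold; what it costs is re-deriving the spectrum that the paper simply takes as given from \cite{L-Z-Z}.
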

\begin{remark}
	In general, the memorized patterns are not very close to each other, therefore neither $|I_1|$ nor 
	$|I_2|$ is close to one. But in case  \(|I_1| = 1\) or \(N - 1\), there are also results analogous to Theorem \ref{lem2}. Interested readers may refer to \cite[Proposition 2.2]{L-Z-Z}.
\end{remark}

The case $(2)$ in Theorem \ref{lem2} is the preferred situation for error-free retrieval of the two stored binary patterns, since these two are global attractive and there is no   non-memorized binary pattern which is stable.
This is the standard bi-stable system with a double-well potential. 
But for   $\varepsilon>\varepsilon^*$, an external signal may converge to new binary pattern, failing to  retrieve the stored memories. The energy landscape when $\varepsilon> \varepsilon^*$ is more complicated with more stable patterns emerging.

\section{Morse Index classification of     binary-pattern critical points}\label{subsec3.2}

We focus on the detailed analysis about the Morse index of all  binary-pattern critical points 
of \eqref{mod} with the   $M=2$ memories when   $\varepsilon\in (0,\varepsilon^*)$ with $\varepsilon^*$ defined in \eqref{epstar}. 
We only need to study the index classification of non-memorized binary pattern critical points, since \cite[Theorem 2.1]{L-Z-Z} has shown  the  memorized binary-patterns $\boldsymbol{\xi}^1$ and $\boldsymbol{\xi}^2$ are
asymptotically stable in this range of $\varepsilon$.

\begin{theorem}\textbf{(Morse index of (unstable) non-memorized binary patterns)}\label{thm1}
	Let $\{\boldsymbol{\xi}^1,\boldsymbol{\xi}^2\}\subset\{-1,1\}^N$ be two memories in system  \eqref{mod} and the corresponding  index sets $I_1,I_2$ be defined in \eqref{index}.
	
	For each  non-memorized binary-pattern $\boldsymbol{\eta}\in \{-1,1\}^N\setminus \{\pm\boldsymbol{\xi}^1,\pm \boldsymbol{\xi}^2\}$,  
	define the following partition of $I_1$ and $I_2$ according to $\boldsymbol{\eta}$:
	\begin{equation} \label{defI}
	\begin{split}
	I_{11}&:=\{j\in I_1\mid \xi^1_j=\xi^2_j=\eta_j\},\quad \quad  I_{12}:=\{j\in I_1\mid \xi^1_j=\xi^2_j=-\eta_j\} ,\\
	I_{21}&:=\{j\in I_2\mid \xi^1_j=-\xi^2_j=\eta_j\},\quad   I_{22}:=\{j\in I_2\mid \xi^1_j=-\xi^2_j=-\eta_j\}.
	\end{split}
	\end{equation}
	Assume \(0<\varepsilon<\varepsilon^*\) with \(\varepsilon^*\) defined in    \eqref{epstar}. 
	
	Then 
	the Morse index of $\boldsymbol{\eta}$ is determined by the cardinalities of these index sets \eqref{defI} in the following way:
	\begin{enumerate}
		\item[$(1)$]    $\min\{|I_{11}|,|I_{12}|\}=0$: 
		\begin{enumerate}
			\item[$(1a)$]  if $\min\{|I_{21}|,|I_{22}|\}=1$, then the index of $\boldsymbol{\eta}$ is $1$;
			\item[$(1b)$]  if $\min\{|I_{21}|,|I_{22}|\}\ge 2$ and $\big||I_{21}|-|I_{22}|\big|<\varepsilon N$,  then $\boldsymbol{\eta}$ is then the Morse index of $\boldsymbol{\eta}$ is $1$;
			\item[$(1c)$]  if $\min\{|I_{21}|,|I_{22}|\}\ge 2$ and $\big||I_{21}|-|I_{22}|\big|>\varepsilon N$, then the Morse index of $\boldsymbol{\eta}$ is  $$\min\{|I_{21}|,|I_{22}|\} \ge 2.$$
		\end{enumerate}
		\item[$(2)$]  $\min\{|I_{11}|,|I_{12}|\}=1$ or ``$\min\{|I_{11}|,|I_{12}|\}\ge 2$ and $\big||I_{11}|-|I_{12}|\big|<\varepsilon N$'':
		\begin{enumerate}
			\item[$(2a)$] if $\min\{|I_{21}|,|I_{22}|\}=0$, then the Morse index of $\boldsymbol{\eta}$ is $1$; 
			\item[$(2b)$] if $\min\{|I_{21}|,|I_{22}|\}=1$, then the Morse index of $\boldsymbol{\eta}$ is  $2$;
			\item[$(2c)$] if $\min\{|I_{21}|,|I_{22}|\}\ge 2$ and $\big||I_{21}|-|I_{22}|\big|<\varepsilon N$, then the Morse index of $\boldsymbol{\eta}$ is $2$;
			\item[$(2d)$] if $\min\{|I_{21}|,|I_{22}|\}\ge 2$ and $\big||I_{21}|-|I_{22}|\big|>\varepsilon N$, then  the Morse index of $\boldsymbol{\eta}$ is  $$\left(\min\{|I_{21}|,|I_{22}|\}+1\right)\ge 3.$$
		\end{enumerate}
		\item[$(3)$]  $\min\{|I_{11}|,|I_{12}|\}\ge 2$ and $\big||I_{11}|-|I_{12}|\big|>\varepsilon N$:
		\begin{enumerate}
			\item[$(3a)$] if $\min\{|I_{21}|,|I_{22}|\}=0$, the index of $\boldsymbol{\eta}$ is $\min\{|I_{11}|,|I_{12}|\}\ge 2$;
			\item[$(3b)$] if $\min\{|I_{21}|,|I_{22}|\}=1$, the index of $\boldsymbol{\eta}$ is $\left(\min\{|I_{11}|,|I_{12}|\}+1\right)\ge 3$;
			\item[$(3c)$] if $\min\{|I_{21}|,|I_{22}|\}\ge 2$ and $\big||I_{21}|-|I_{22}|\big|<\varepsilon N$, then the Morse index of $\boldsymbol{\eta}$ is $$\left(\min\{|I_{11}|,|I_{12}|\}+1\right)\ge 3.$$
			\item[$(3d)$] if $\min\{|I_{21}|,|I_{22}|\}\ge 2$ and $\big||I_{21}|-|I_{22}|\big|>\varepsilon N$, then the Morse index of $\boldsymbol{\eta}$ is $$\left(\min\{|I_{11}|,|I_{12}|\}+\min\{|I_{21}|,|I_{22}|\}\right)\ge 4.$$
		\end{enumerate}
	\end{enumerate} 
\end{theorem}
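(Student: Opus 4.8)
The plan is to compute the spectrum of the Jacobian $\boldsymbol{\mathcal J}_{\boldsymbol{\varphi}^*(\boldsymbol{\eta})}$ explicitly and count its positive eigenvalues, which by Definition \ref{def1} is the Morse index. First I would introduce the sign variables $s_i^k:=\xi_i^k\eta_i\in\{-1,1\}$ and the vectors $\boldsymbol s^k=(s_1^k,\dots,s_N^k)^\top$ for $k=1,2$. The point of this substitution is that the four sets in \eqref{defI} are precisely the level sets of the pair $(s_i^1,s_i^2)$: it equals $(+,+)$ on $I_{11}$, $(-,-)$ on $I_{12}$, $(+,-)$ on $I_{21}$ and $(-,+)$ on $I_{22}$. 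Using the form of the Jacobian at a binary-pattern critical point given in Section \ref{sec2}, one rewrites it as $\boldsymbol{\mathcal J}=B-D$, where
\[ B=\tfrac1N\big(\boldsymbol s^1(\boldsymbol s^1)^\top+\boldsymbol s^2(\boldsymbol s^2)^\top\big)+\tfrac{2\varepsilon}{N}\,\mathbbm 1\mathbbm 1^\top \]
has rank at most $3$, and $D=\operatorname{diag}(d_i)$ with $d_i=\tfrac1N(m_1 s_i^1+m_2 s_i^2)+2\varepsilon$ and $m_k=\boldsymbol{\xi}^k\cdot\boldsymbol{\eta}$; crucially $d_i$ is constant on each of the four sets, equal to some value $\delta_\alpha$ on $I_\alpha$.

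Next I would exploit the invariance of $\boldsymbol{\mathcal J}$ under permutations of oscillators within each of $I_{11},I_{12},I_{21},I_{22}$. This splits $\mathbb R^N$ into the four ``zero-sum'' subspaces (vectors supported on a single $I_\alpha$ and summing to zero) and the at most four-dimensional ``symmetric'' subspace of vectors constant on each $I_\alpha$. On a zero-sum vector $v$ supported on $I_\alpha$ the columns $\boldsymbol s^1,\boldsymbol s^2$ and $\mathbbm 1$ all act trivially, so $Bv=0$ and $\boldsymbol{\mathcal J}v=-\delta_\alpha v$; hence $I_\alpha$ contributes the eigenvalue $-\delta_\alpha$ with multiplicity $|I_\alpha|-1$. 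A short computation gives $-\delta_{11}=\tfrac{2}{N}(|I_{12}|-|I_{11}|)-2\varepsilon$ and three analogues, so within the pair $\{I_{11},I_{12}\}$ one obtains exactly $\min\{|I_{11}|,|I_{12}|\}-1$ positive within-group eigenvalues when $\min\{|I_{11}|,|I_{12}|\}\ge 2$ and $\big||I_{11}|-|I_{12}|\big|>\varepsilon N$, and none otherwise; the same statement holds for $\{I_{21},I_{22}\}$.

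It remains to analyze the symmetric subspace, which is the heart of the argument. Writing $\boldsymbol{\mathcal J}$ in the orthonormal basis $\hat e_\alpha=\mathbbm 1_{I_\alpha}/\sqrt{|I_\alpha|}$ and clearing the factor $1/N$, I would reduce the problem to comparing the eigenvalues of $G:=G_0+2\varepsilon\,\boldsymbol\tau\boldsymbol\tau^\top$ with $2\varepsilon N$, where $\boldsymbol\tau=(\sqrt{|I_{11}|},\sqrt{|I_{12}|},\sqrt{|I_{21}|},\sqrt{|I_{22}|})^\top$ is the coordinate vector of $\mathbbm 1$. The key simplification is that $G_0$ decouples into a rank-one block on $\{I_{11},I_{12}\}$ and a rank-one block on $\{I_{21},I_{22}\}$, with spectra $\{0,2|I_1|\}$ and $\{0,2|I_2|\}$ and with nonzero eigenvectors $q_1,q_2$ orthogonal to $\boldsymbol\tau$. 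Since the rank-one perturbation points along $\boldsymbol\tau$, it leaves $q_1,q_2$ untouched and only mixes the two null directions of $G_0$, one of which is $\boldsymbol\tau$ itself (the trivial translation mode). Translating back, the symmetric subspace contributes exactly the eigenvalues $0$, $-2\varepsilon$, $\tfrac{2|I_1|}{N}-2\varepsilon$ and $\tfrac{2|I_2|}{N}-2\varepsilon$; because $0<\varepsilon<\varepsilon^*=\min\{|I_1|/N,|I_2|/N\}$, the last two are strictly positive. Thus the symmetric subspace adds one positive eigenvalue for each pair $I_k$ that is genuinely split, i.e. with both of its sub-sets nonempty.

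Finally I would assemble the count: the Morse index equals the number of split pairs among $\{I_1,I_2\}$ plus the within-group positive counts from each pair. Running through the trichotomy for $I_1$ --- $\min\{|I_{11}|,|I_{12}|\}=0$ (unsplit, contributing $0$), the moderate case $\min=1$ or $\min\ge2$ with small gap (contributing $1$ from splitting, $0$ within-group), and the large-gap case $\min\ge2$ with $\big||I_{11}|-|I_{12}|\big|>\varepsilon N$ (contributing $1+(\min-1)=\min$) --- and the same trichotomy for $I_2$, one recovers all eleven subcases (1a)--(3d) by addition. The main obstacle is the careful bookkeeping of the degenerate configurations in which one of the four sets is empty: there the symmetric subspace drops in dimension and the mode $q_k$ disappears, so one must verify that an empty sub-set genuinely removes the associated positive eigenvalue rather than shifting it. Checking that both pairs are unsplit only when $\boldsymbol{\eta}\in\{\pm\boldsymbol{\xi}^1,\pm\boldsymbol{\xi}^2\}$ (which is excluded) then confirms that every non-memorized pattern has Morse index at least one, consistent with the table.
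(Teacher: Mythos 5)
Your argument is correct and reaches the theorem by essentially the same route as the paper: obtain the full spectrum of the Jacobian at a binary-pattern critical point and then count positive eigenvalues case by case. Your permutation-symmetry/$B-D$ decomposition reproduces exactly the eigenvalue list \eqref{eq:8eig} (including the multiplicities, the two simple eigenvalues $0$ and $-2\varepsilon$, and the existence conditions when one of the four sets is empty), and your final bookkeeping --- index equals the number of split pairs plus a within-pair contribution of $\min-1$ precisely when $\min\ge 2$ and the gap exceeds $\varepsilon N$ --- agrees with all eleven subcases; the only difference is that the paper imports the spectrum from \cite{L-Z-Z} rather than deriving it, so your derivation is a self-contained substitute for that citation.
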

\begin{proof}
	The proof is based on  \cite[Proposition 2.1,Lemma 2.1]{L-Z-Z},  which has computed the spectrum of the Jacobian matrix for   any non-memorized binary pattern \(\boldsymbol{\eta}\) when $\varepsilon$ is {\it any} real number: The total $N$ eigenvalues with their multiplicities are listed  
	\begin{equation} \label{eq:8eig}
	\begin{split}
	&\underbrace{2\big(\frac{-|I_{11}|+|I_{12}|}{N}-\varepsilon\big)}_{|I_{11}|-1,\,\text{if}\,I_{11}\neq \emptyset},\;\; \underbrace{2\big(\frac{|I_{11}|-|I_{12}|}{N}-\varepsilon\big)}_{|I_{12}|-1,\,\text{if}\,I_{12}\neq \emptyset},\;\; \underbrace{2\big( \frac{|I_1|}{N}-\varepsilon \big)}_{1,\,\text{if}\,I_{11}\neq \emptyset\,\text{and}\,I_{12} \neq \emptyset}, \\
	& \underbrace{2\big(\frac{-|I_{21}|+|I_{22}|}{N}-\varepsilon\big)}_{|I_{21}|-1,\,\text{if}\,I_{21}\neq \emptyset},\;\; \underbrace{2\big(\frac{|I_{21}|-|I_{22}|}{N}-\varepsilon\big)}_{|I_{22}|-1,\,\text{if}\,I_{22}\neq \emptyset},\;\; \underbrace{2\big( \frac{|I_2|}{N}-\varepsilon \big)}_{1,\,\text{if}\,I_{21}\neq \emptyset\,\text{and}\,I_{22} \neq \emptyset}, \;\;
	\end{split}     
	\end{equation}
	and  two simple eigenvalues $-2\varepsilon$ and $0$.
	In \eqref{eq:8eig},   the zero multiplicity (when $|I_{11}|=1$ or $|I_{12}|=1$ or $|I_{21}|=1$ or $|I_{22}|=1$) and the empty set (when $I_{11}=\emptyset$ or $I_{12}=\emptyset$ or $I_{21}=\emptyset$ or $I_{22}=\emptyset$)  mean the corresponding eigenvalue does not exist.
	
	Then the main work is to identify  the conditions for each given  number of negative eigenvalues, under the assumption that $0<\varepsilon < \varepsilon^*$. 
	We only show how to verify  Part (1) below and skip  Part (2) and (3).
	
	Part $(1)$: Assume that $\min\{|I_{11}|,|I_{12}|\}=0$. 
	We first examine   the  $|I_1|-1$ eigenvalues  in the first line of \eqref{eq:8eig}.
	The single eigenvalue $2\big(\frac{|I_1|}{N}-\varepsilon\big)$ does not exist. 
	Since  \(I_1\neq\emptyset\), then exactly one of \(|I_{11}|\) and \(|I_{12}|\) is equal to $0$.    
	Without loss of generality, assume that \(|I_{11}| = 0\) and \(|I_{12}|=|I_1|\geq 1\). Then the    eigenvalue   $2\big(\frac{-|I_{11}|+|I_{12}|}{N}-\varepsilon\big)$  does not exist. The  remaining  eigenvalue $2\big(\frac{|I_{11}|-|I_{12}|}{N}-\varepsilon\big)$ either   does not exist   when \(|I_{12}| = 1\), or  is negative for any $\varepsilon\ge 0$. Therefore, $\min\{|I_{11}|,|I_{12}|\}=0$ implies that  all eigenvalues in  the first line of  \eqref{eq:8eig}  either disappear  or are negative, so they do not contribute to the Morse index.
	
	We then  only need to  discuss the   signs of  the eigenvalues in the second line of \eqref{eq:8eig}.
	\begin{equation}\label{eigen}
	\underbrace{2\big(\frac{-|I_{21}|+|I_{22}|}{N}-\varepsilon\big)}_{|I_{21}|-1,\,\text{if}\,I_{21}\neq \emptyset},\;\; ~\underbrace{2\big(\frac{|I_{21}|-|I_{22}|}{N}-\varepsilon\big)}_{|I_{22}|-1,\,\text{if}\,I_{22}\neq \emptyset},\;\; \underbrace{2\big( \frac{|I_2|}{N}-\varepsilon \big)}_{1,\,\text{if}\,I_{21}\neq \emptyset\,\text{and}\,I_{22} \neq \emptyset},   
	\end{equation}

	We remark that 
	$\min\{|I_{11}|,|I_{12}|\}=0$ and $\min\{|I_{21}|,|I_{22}|\}=0$ can not hold simultaneously. Otherwise,    exactly one of $|I_{11}|$ and $|I_{12}|$ must be $0$, and exactly one of $|I_{21}|$ and $|I_{22}|$ must be $0$. Without loss of generality, assume that $|I_{11}| =|I_{21}| = 0$ and \(|I_{12}|,|I_{22}|\geq 1\), then \(\boldsymbol{\eta}= -\boldsymbol{\xi}^1\), which contradicts to \(\boldsymbol{\eta}\notin\{\pm\boldsymbol{\xi}^1,\pm \boldsymbol{\xi}^2\}\). 
	
	Therefore, we have that $\min\{|I_{21}|,|I_{22}|\}\ge 1$
	and the last eigenvalue $2\big(\frac{|I_{2}|}{N}-\varepsilon\big)$ in \eqref{eigen} exists and is  positive because $\varepsilon<\varepsilon^*\le \frac{|I_2|}{N}$, so this le eigenvalue will be counted toward the Morse index.

	$(1a)$ If $\min\{|I_{21}|,|I_{22}|\}=1$, 
	suppose $|I_{22}|=1$  without loss of generality,  
	then the first value in \eqref{eigen} is negative because $\frac{-|I_{21}|+|I_{22}|}{N}=\frac{1-|I_{21}|}{N}\leq 0$.
	The second value in \eqref{eigen} does not exit. 
	Therefore, the Morse index of  $\boldsymbol{\eta}$ is one.
	
	$(1b)$ If $\min\{|I_{21}|,|I_{22}|\}\ge 2$ and $\big||I_{21}|-|I_{22}|\big|<\varepsilon N$, then the first two  numbers  in \eqref{eigen} are negative. So, $\boldsymbol{\eta}$ is of index-$1$.
	
	$(1c)$ If $\min\{|I_{21}|,|I_{22}|\}\ge 2$ and $\big||I_{21}|-|I_{22}|\big|>\varepsilon N$, 
	suppose $|I_{21}|-|I_{22}|>\varepsilon N$ without loss of generality, 
	then the first value in \eqref{eigen} is negative. The second value in \eqref{eigen} is positive due to  $0<\varepsilon<\varepsilon^*$,
	contributing to the Morse index.  The Morse index is then $|I_{22}|$. Similarly, when $|I_{22}|-|I_{21}|>\varepsilon N$, $\boldsymbol{\eta}$ is of index-$|I_{21}|$. Therefore, 
	the Morse index of $\boldsymbol{\eta}$ is  $\min\{|I_{21}|,|I_{22}|\}$.
\end{proof}

\begin{remark}\label{rem2}
	Of particular interest are the index-1 critical points because they lie on the separatrix (basin boundary) between the two stable memorized binary patterns. By the above theorem, 
	if \(0<\varepsilon<\varepsilon^*\), 
	the equivalent 
	conditions for being index-1 is 
	\begin{enumerate}
		\item[either:]~ $\min\{|I_{a1}|,|I_{a2}|\}=0$ and 
		$\min\{|I_{b1}|,|I_{b2}|\}=1$, 
		\item[or:]~   $\min\{|I_{a1}|,|I_{a2}|\}=0$ and 
		$\min\{|I_{b1}|,|I_{b2}|\}\geq 2$ and $\big||I_{b1}|-|I_{b2}|\big|<\varepsilon N$,
	\end{enumerate}
	where    $(a,b)=(1,2)$ or $(2,1)$. 
	These two conditions are easy to check and provide the complete list of {\it all} index-1 binary-pattern critical points, under the  setting $M=2$ and $\varepsilon\in (0,\varepsilon^*)$.
	There are at most $8$ index -$1$ saddle points in case $(i)$:
	\begin{itemize}
		\item $|I_{a1}|=0$, $|I_{a2}|=|I_a|$, $|I_{b1}|=1$, $|I_{b2}|=|I_b|-1$;
		\item $|I_{a1}|=0$, $|I_{a2}|=|I_a|$, $|I_{b1}|=|I_b|-1$, $|I_{b2}|=1$;
		\item $|I_{a1}|=|I_a|$, $|I_{a2}|=0$, $|I_{b1}|=1$, $|I_{b2}|=|I_b|-1$;
		\item $|I_{a1}|=|I_a|$, $|I_{a2}|=0$, $|I_{b1}|=|I_b|-1$, $|I_{b2}|=1$.
	\end{itemize} 
	There are at most $24$ index -$1$ saddle points in case $(ii)$ when $0<\varepsilon<1/N$:
	\begin{itemize}
		\item $|I_{a1}|=0$, $|I_{a2}|=|I_a|$, $|I_{b1}|=|I_{b2}|=|I_b|/2$;
		\item $|I_{a1}|=|I_a|$, $|I_{a2}|=0$, $|I_{b1}|=|I_{b2}|=|I_b|/2$,
	\end{itemize}
	where    $(a,b)=(1,2)$ or $(2,1)$. 
	
	As an example to heuristically understand such index-1 saddle points,  we consider  $|I_{12}|=0$ and $|I_{22}|=1$, then  the oscillators
	in $\boldsymbol{\eta}$  
	matches exactly the memorized pattern wherever the two memorized oscillator agree, and almost matches
	(by only one oscillator misfit for case (i))  one of them wherever the two memories disagree.
\end{remark}

\section{Ternary-pattern  points}\label{sec-mid}

For any binary pattern \(\boldsymbol{\eta}\in\{-1,1\}^N\), the binary-pattern critical point \(\boldsymbol{\varphi}^{*}(\boldsymbol{\eta})\) of system \eqref{mod} defined  by   
$\arccos\boldsymbol{\eta}$ or $\arccos\boldsymbol{-\eta}$
\eqref{Hebbian}  is in $\{0,\pi\}^N$, subject to the translation $\boldsymbol{\varphi}\to \boldsymbol{\varphi}+\delta \mathbbm{1}$ and the sign inversion ($\boldsymbol{\varphi}\to -\boldsymbol{\varphi}$).
Beside the class of oscillators in $\{0,\pi\}^N$ corresponding to the binary patterns,
we shall extend  our discussion 
to a broader class  of   $\{0,\pi/2,\pi\}^N$, consisting of totally $3^N$ points.
We call these points in $\{0,\pi/2,\pi\}^N$  the {\it ternary-pattern points}. One of the motivation to consider this class is  that  the midpoint of any two  binary-pattern  points
associated with 
$\boldsymbol{\eta}$ and $\boldsymbol{\tilde{\eta}}$, 
$\frac12 \boldsymbol{\varphi}^{*}(\boldsymbol{\eta}) + \frac12 \boldsymbol{\varphi}^{*}(\boldsymbol{\tilde{\eta}})$,
takes   possible values of $0,\pi/2,\pi$  for each component. 
Certainly, any ternary-pattern point is the midpoint of two binary-pattern points.
The binary-pattern points
are a special subset of the ternary-pattern points.

We are interested in exploring the same  research questions
for the class of ternary-pattern points:
\begin{enumerate}
	
	\item Are all ternary-pattern points are critical points of \eqref{mod}?
	\item If a  ternary-pattern point is a critical point of system \eqref{mod}, what is its linear stability?
	\item Can we quantify the Morse index for the ternary-pattern critical points?
\end{enumerate}

If the ternary-pattern 
is the midpoint of two of {\it memorized} binary-patterns $\boldsymbol{\xi}^k$,
under the mutual orthogonality condition for memorized binary patterns,
we have the clear answers, depending on whether the second Fourier term in \eqref{mod} exists or not.    We summarizes in the lemma below 
two existing  results \cite[Theorem 2.2]{hölzel2015stability} for $\varepsilon=0$ and \cite[Theorems 4.6-4.7]{zhao2020stability} for $\varepsilon>0$.
\begin{lemma}
	Let $\{\boldsymbol{\xi}^k\}_{k\in [M]}$ be mutually orthogonal memorized binary patterns in system \eqref{mod} 
	and consider the liner combination of any two memories:  $\boldsymbol{\varphi}_u:=\boldsymbol{\varphi}^*(\boldsymbol{\xi}^k)+(\boldsymbol{\varphi}^*(\boldsymbol{\xi}^l)-\boldsymbol{\varphi}^*(\boldsymbol{\xi}^k))u,u\in \mathbb{R}$.
	\begin{itemize}
		\item If $\varepsilon=0$, then $\boldsymbol{\varphi}_u$ is a critical point of \eqref{mod} for each $u\in \mathbb{R}$, 
		thus 
		$\{\boldsymbol{\varphi}^*(\boldsymbol{\xi}^k)\}_{k\in [M]}$ are non-isolated critical points and is only neurally stable.
		\item If $\varepsilon>0$, then 
		$\boldsymbol{\varphi}_u$ is a critical point if and only if $u=\frac{m}{2}$ for $m\in \mathbb{Z}$.
		Furthermore,  $\{\boldsymbol{\varphi}^*(\boldsymbol{\xi}^k)\}_{k\in [M]}$ are asymptotically stable; and  the midpoint $\boldsymbol{\varphi}_{1/2}$ is   unstable.
	\end{itemize}
\end{lemma}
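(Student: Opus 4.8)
The plan is to collapse the $N$-dimensional gradient system onto a handful of order parameters and then track them along the segment $\boldsymbol{\varphi}_u$. First I would rewrite the energy \eqref{V} in the separable form
\begin{equation*}
V(\boldsymbol{\varphi})=-\frac{1}{2N}\sum_{m=1}^{M}\big(P_m^2+Q_m^2\big)-\frac{\varepsilon}{4N}\big(R^2+S^2\big),
\end{equation*}
where $P_m=\sum_j\xi^m_j\cos\varphi_j$, $Q_m=\sum_j\xi^m_j\sin\varphi_j$, $R=\sum_j\cos 2\varphi_j$ and $S=\sum_j\sin 2\varphi_j$; this uses only $C_{ij}=\sum_m\xi^m_i\xi^m_j$ together with the product-to-sum identities. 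Differentiating gives the compact expression $\partial V/\partial\varphi_i=\tfrac{\sin\varphi_i}{N}\sum_m\xi^m_iP_m-\tfrac{\cos\varphi_i}{N}\sum_m\xi^m_iQ_m-\tfrac{\varepsilon}{N}\big(S\cos 2\varphi_i-R\sin 2\varphi_i\big)$, and this is the quantity I must show vanishes (modulo the $\mathbbm{1}$ direction) along $\boldsymbol{\varphi}_u$.

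Next I would evaluate the moments along the segment. Splitting $[N]$ into the agreement set $\{j:\xi^k_j=\xi^l_j\}$, on which $\varphi_{u,j}\in\{0,\pi\}$ stays frozen, and the disagreement set, on which $\varphi_{u,j}$ advances linearly at rate $\pm\pi$, one has $\sin\varphi_{u,j}=0$ on the former and $\cos\varphi_{u,j}=\xi^k_j\cos\pi u$, $\sin\varphi_{u,j}=\sin\pi u$ on the latter. Substituting into $P_m,Q_m,R,S$ and invoking mutual orthogonality $\boldsymbol{\xi}^m\!\cdot\!\boldsymbol{\xi}^{m'}=N\delta_{mm'}$ collapses the sums: $P_m$ is supported only on $m\in\{k,l\}$ with $P_k=\tfrac N2(1+\cos\pi u)$ and $P_l=\tfrac N2(1-\cos\pi u)$, while $R,S$ reduce to affine functions of $\cos 2\pi u$ and $\sin 2\pi u$.

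For $\varepsilon=0$ I would first normalize via the gauge symmetry $(\varphi_i,\xi^m_i)\mapsto(\varphi_i+\pi,-\xi^m_i)$ on the index set where $\xi^k_i=-1$, a symmetry of \eqref{mod} that lets me take $\boldsymbol{\xi}^k=\mathbbm{1}$ so that the agreement contributions to $\sum_j C_{ij}$ cancel exactly; the displayed gradient then vanishes identically in $u$, so every $\boldsymbol{\varphi}_u$ is critical and the memories are non-isolated. Neutral stability follows because this one-parameter family forces a zero eigenvalue of the Hessian transverse to $\mathbbm{1}$, while the remaining eigenvalues, read off from the spectrum quoted in Section \ref{subsec-binary} at $\varepsilon=0$, are nonpositive; hence $V$ has a degenerate (non-strict) minimum and the memories are Lyapunov but not asymptotically stable. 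For $\varepsilon>0$ the second-order term no longer cancels: along the segment it contributes a residual proportional to $\varepsilon\sin 2\pi u$ to $\partial V/\partial\varphi_i$, so the gradient vanishes exactly when $\sin 2\pi u=0$, i.e. $u=m/2$, which is the asserted quantization. Asymptotic stability of the memories ($u\in\mathbb{Z}$) is then immediate from the quoted spectrum, in which the $M-1$ degenerate zero eigenvalues are lifted to $-2\varepsilon<0$.

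The remaining and, I expect, hardest point is the instability of the midpoint $\boldsymbol{\varphi}_{1/2}$, whose components lie in $\{0,\pi/2,\pi\}$. Because of the $\pi/2$ entries the shortcut $\lambda\mapsto\lambda-2\varepsilon$ relating the $\varepsilon=0$ and $\varepsilon>0$ spectra of binary points no longer applies, so the Jacobian \eqref{Jaco1}--\eqref{Jaco2} must be diagonalized directly on the rotating block. The plan is to use the block structure induced by the agreement/disagreement partition, together with orthogonality, to reduce the Jacobian to a few explicit blocks and to exhibit at least one positive eigenvalue, which already yields instability. This computation is precisely the $N$-oscillator forerunner of the ternary-pattern spectrum in Theorem \ref{thm-mid} and of the index-$1$ identification of the midpoint in Theorem \ref{Morthm1}.
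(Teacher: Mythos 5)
The paper itself does not prove this lemma: it is explicitly imported from \cite[Theorem 2.2]{hölzel2015stability} and \cite[Theorems 4.6--4.7]{zhao2020stability}, so your attempt has to stand on its own, and as written it has a genuine gap at its central step. Your order-parameter reduction and the evaluation of $P_k=\tfrac N2(1+\cos\pi u)$, $P_l=\tfrac N2(1-\cos\pi u)$, $P_m=0$ for $m\notin\{k,l\}$ are correct, but the claim that for $\varepsilon=0$ ``the displayed gradient then vanishes identically in $u$'' is asserted exactly where the difficulty lives: you never compute $Q_m=\sum_j\xi^m_j\sin\varphi_{u,j}=\sin(\pi u)\sum_{j\in D}\xi^m_j$, which is generically nonzero for every $m$, including the $M-2$ patterns other than $\boldsymbol{\xi}^k,\boldsymbol{\xi}^l$. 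The cancellation of $\sum_m\xi^m_iQ_m$ requires $\sum_{j\in D}\xi^m_j=\tfrac N2(\delta_{mk}-\delta_{ml})$, which holds only after your gauge normalization $\boldsymbol{\xi}^k=\mathbbm{1}$ (it needs $\boldsymbol{\xi}^m\cdot\mathbbm{1}=\boldsymbol{\xi}^m\cdot\boldsymbol{\xi}^k$), and that normalization is not innocent: adding $\pi$ to the coordinates with $\xi^k_i=-1$ reverses the winding direction of those components, so the normalized segment is a \emph{different} path on the torus from the $\boldsymbol{\varphi}_u$ of the statement, which is built from the $\arccos$ lift. The distinction matters: for $N=4$, $M=3$ with the mutually orthogonal patterns $\boldsymbol{\xi}^1=(1,1,-1,-1)^{\top}$, $\boldsymbol{\xi}^2=(1,-1,1,-1)^{\top}$, $\boldsymbol{\xi}^3=(1,-1,-1,1)^{\top}$, the $\arccos$-lifted segment is $\boldsymbol{\varphi}_u=(0,\pi u,\pi-\pi u,\pi)^{\top}$ and a direct computation gives $f_1(\boldsymbol{\varphi}_u)=-\tfrac12\sin\pi u\neq0$, so that segment is \emph{not} a curve of critical points (and its midpoint is not critical either), whereas your gauge-normalized segment is. Your argument silently swaps one segment for the other; a correct proof must either track the lift through the gauge transformation or restrict to $M=2$ (where $C_{ij}=0$ for $i$ in the agreement set and $j$ in the disagreement set, and everything you wrote goes through).

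The second gap is that the instability of $\boldsymbol{\varphi}_{1/2}$ for $\varepsilon>0$ is only a plan, not a proof. The shortest route is the one the paper takes for $M=2$ in Theorem \ref{thm-mid}(2): exhibit the single vector $\boldsymbol{x}^*(I_1;I_2)$, constant on the agreement block and on the disagreement block with zero total sum, and verify $\boldsymbol{\mathcal{J}}\boldsymbol{x}^*=2\varepsilon\boldsymbol{x}^*$, which already forces one positive eigenvalue and hence instability; but that verification is precisely the computation you defer, and for $M>2$ orthogonal patterns the first-order block of the Jacobian no longer has the two-pattern structure used there, so it would need to be redone, not merely cited. Your $\varepsilon>0$ quantization $u=m/2$ and the asymptotic stability of the memories are fine \emph{conditional} on the first-order part vanishing along the segment, so both remaining issues trace back to the same unproven cancellation.
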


The above results  require the {\it orthogonal} condition for the memorized patterns. For \(\varepsilon=0\), all points on the line connecting any two memorized binary-pattern critical points are critical points of system \eqref{mod}. As \(\varepsilon>0\), these critical points on the lines   disappear, and only the midpoints between memorized binary-pattern critical points remain as critical points, which are unstable. 

But it is not clear whether the midpoint of memories (or more general, in the larger family  of ternary-pattern points) 
is still a critical point, without the orthogonality condition.
We shall   investigate  the three questions mentioned above for the special case $M=2$.

\subsection{Necessary and sufficient condition for the ternary-pattern critical point }

Unlike the binary patterns where $\boldsymbol{\varphi}\in \{0,\pi\}^N$   is always a critical point in our setting of $M=2$, 
the ternary-pattern points $\{0,\pi/2, \pi\}^N$ may not necessarily be a critical point.
We first provide a necessary and sufficient condition for a ternary-pattern point in $\{0,\pi/2,\pi\}^N$ to be a critical point of system \eqref{mod} for $M=2$.
We then show that the midpoint of the two memorized binary patterns, as a special ternary-pattern point, is indeed a critical point.
We  do not assume 
mutual orthogonality of the memorized binary patterns.


\begin{proposition}\textbf{(Ternary-pattern point as critical point: necessity and sufficiency)}\label{prop}
	Let $\{\boldsymbol{\xi}^1,\boldsymbol{\xi}^2\}\subset\{-1,1\}^N$ be two memories in system \eqref{mod}. Define \(I_1,I_2\) as in \eqref{index}.
	For each ternary-pattern point  \({\boldsymbol{\varphi}}=({\varphi}_1,{\varphi}_2,\dots,{\varphi}_N)^{\top}\in\{0,\pi/2,\pi\}^N\), define the following partition of its coordinates:  
	$$X_0:=\{j\in [N]\mid {\varphi}_j=0\}, \quad X_{\pi}:=\{j\in [N]\mid {\varphi}_j=\pi\}, \quad X_{\pi/2}:=\{j\in [N]\mid {\varphi}_j=\pi/2\}.$$
	Let $\varepsilon\in \mathbb{R}$.
	Then $\boldsymbol{\varphi}$ is a critical point of \eqref{mod} 
	if and only if all the following  four conditions  hold for $X_0, X_{\pi}, X_{\pi/2}$:
	\begin{align}\label{X123}
	\begin{split}
	&  \sum\nolimits_{j\in I_1\cap X_{\pi/2}} \xi^1_j=0,\quad\quad \quad\quad \quad\quad  \text{ if }\;   I_1\cap (X_0\cup X_{\pi}) \neq \emptyset;\\
	& \sum\nolimits_{j\in I_2\cap X_{\pi/2}} \xi^1_j=0,\quad \quad\quad  \quad\quad\quad \text{ if }\; I_2\cap (X_0\cup X_{\pi}) \neq \emptyset;\\
	& \sum\nolimits_{j\in I_1\cap X_0} \xi^1_j=\sum\nolimits_{j\in I_1\cap X_{\pi}} \xi^1_j,\quad \text{ if }\;  I_1\cap X_{\pi/2} \neq \emptyset;\\
	&\sum\nolimits_{j\in I_2\cap X_0} \xi^1_j=\sum\nolimits_{j\in I_2\cap X_{\pi}} \xi^1_j,\quad \text{ if }\;  I_2\cap X_{\pi/2} \neq \emptyset.
	\end{split}
	\end{align}
\end{proposition}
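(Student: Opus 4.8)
The plan is to write the critical-point equation $\boldsymbol{f}(\boldsymbol{\varphi})=\boldsymbol{0}$ out coordinatewise and exploit that a ternary pattern takes only the values $0,\pi/2,\pi$. The structural key, which I would establish first, is that the second-order Fourier term vanishes identically on $\{0,\pi/2,\pi\}^N$: for any $i,j$ the difference $\varphi_j-\varphi_i$ lies in $\{0,\pm\pi/2,\pm\pi\}$, hence $2(\varphi_j-\varphi_i)\in\{0,\pm\pi,\pm2\pi\}$ and $\sin 2(\varphi_j-\varphi_i)=0$. Thus the $\varepsilon$-term contributes nothing, which is precisely why the stated criterion is independent of $\varepsilon$, and the critical-point condition reduces to $\sum_{j}C_{ij}\sin(\varphi_j-\varphi_i)=0$ for every $i\in[N]$.

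Next I would tabulate $\sin(\varphi_j-\varphi_i)$ for $\varphi_i,\varphi_j\in\{0,\pi/2,\pi\}$. The only nonzero entries are: for $\varphi_i\in\{0,\pi\}$ the sine is $\pm1$ exactly when $\varphi_j=\pi/2$ and $0$ otherwise; for $\varphi_i=\pi/2$ it is $-1$ when $\varphi_j=0$, $+1$ when $\varphi_j=\pi$, and $0$ when $\varphi_j=\pi/2$. Splitting the $N$ equations according to whether $i\in X_0$, $i\in X_\pi$, or $i\in X_{\pi/2}$, the $i$-th equation collapses to $\sum_{j\in X_{\pi/2}}C_{ij}=0$ whenever $i\in X_0\cup X_\pi$ (the common factor $\pm1$ divides out), and to $\sum_{j\in X_\pi}C_{ij}=\sum_{j\in X_0}C_{ij}$ whenever $i\in X_{\pi/2}$.

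The last step substitutes $C_{ij}=\xi^1_i\xi^1_j+\xi^2_i\xi^2_j$ and separates the $i$-dependence from the $j$-dependence. Writing $S_a:=\sum_{j\in X_{\pi/2}}\xi^a_j$, the equation for $i\in X_0\cup X_\pi$ reads $\xi^1_iS_1+\xi^2_iS_2=0$; since $\xi^1_i=\xi^2_i$ on $I_1$ and $\xi^1_i=-\xi^2_i$ on $I_2$, this forces $S_1+S_2=0$ as soon as some such $i$ lies in $I_1$, and $S_1-S_2=0$ as soon as some such $i$ lies in $I_2$. Using that $\xi^1_j+\xi^2_j$ equals $2\xi^1_j$ on $I_1$ and $0$ on $I_2$ (and $\xi^1_j-\xi^2_j$ equals $2\xi^1_j$ on $I_2$ and $0$ on $I_1$) converts $S_1+S_2$ and $S_1-S_2$ into $2\sum_{j\in I_1\cap X_{\pi/2}}\xi^1_j$ and $2\sum_{j\in I_2\cap X_{\pi/2}}\xi^1_j$, giving the first two displayed conditions. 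The identical computation applied to $\sum_{j\in X_\pi}C_{ij}-\sum_{j\in X_0}C_{ij}$ for $i\in X_{\pi/2}$ yields the third and fourth. Necessity and sufficiency follow at once, since each scalar condition is logically equivalent to the corresponding subfamily of coordinate equations.

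The one genuinely nontrivial point — and where I expect the proof to require care rather than computation — is the bookkeeping of the conditional (nonemptiness) provisos: a condition such as $S_1+S_2=0$ is imposed \emph{exactly} when there exists an index $i$ in the relevant set $I_1\cap(X_0\cup X_\pi)$ forcing it, and is vacuous otherwise. Matching each ``if $\cdots\neq\emptyset$'' clause in \eqref{X123} to the precise family of coordinate equations that produces it is the crux; everything else is the sine table together with the routine $I_1/I_2$ sign algebra.
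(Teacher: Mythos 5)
Your proposal is correct and follows essentially the same route as the paper's proof: both observe that every term $\sin 2(\varphi_j-\varphi_i)$ vanishes on $\{0,\pi/2,\pi\}^N$, reduce the critical-point equations to $\sum_j C_{ij}\sin(\varphi_j-\varphi_i)=0$, and then evaluate these by splitting indices according to $I_1/I_2$ and $X_0,X_\pi,X_{\pi/2}$ (the paper splits first on $i\in I_1$ vs.\ $i\in I_2$ using $C_{ij}=\xi^1_i(\xi^1_j\pm\xi^2_j)$, you split first on the value of $\varphi_i$, but the computations and the matching of the nonemptiness provisos to the coordinate equations are identical in substance).
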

\begin{proof}
	Note that the second-order term in \eqref{mod} for ternary-patter points vanish since 
	$\sum_{j=1}^{N}\sin 2({\varphi}_j-{\varphi}_i)\equiv 0$ for all $\varphi_i,\varphi_j \in \{0,\pi/2,\pi\}$.
	We only need study $\sum_{j=1}^{N}C_{ij}\sin({\varphi}_j-{\varphi}_i)$.

	For $i\in I_1$ and $j\in [N]$, we have that $\xi^1_i=\xi^2_i$ and $C_{ij}=\xi^1_i (\xi^1_j+\xi^2_j)$. Then $C_{ij}=2\xi^1_i\xi^1_j$ for $j\in I_1$; $C_{ij}=0$ for $j\in I_2$.
	We obtain that for $i\in I_1$,
	\begin{align*}
	&\quad \sum\nolimits_{j=1}^{N}C_{ij}\sin({\varphi}_j-{\varphi}_i)    =2\xi^1_i\sum\nolimits_{j\in I_1}\xi^1_j \sin({\varphi}_j-{\varphi}_i)\\
	& =2\xi^1_i \left[-\sum\nolimits_{j\in I_1\cap X_0}\xi^1_j\sin{\varphi}_i+\sum\nolimits_{j\in I_1\cap X_{\pi}}\xi^1_j\sin{\varphi}_i+\sum\nolimits_{j\in I_1\cap X_{\pi/2}}\xi^1_j\cos{\varphi}_i  \right]\\
	& =2\xi^1_i\left[\sin{\varphi}_i\left( \sum\nolimits_{j\in I_1\cap X_{\pi}}\xi^1_j-\sum\nolimits_{j\in I_1\cap X_{0}}\xi^1_j \right) +\cos{\varphi}_i \sum\nolimits_{j\in I_1\cap X_{\pi/2}}\xi^1_j \right], 
	\end{align*}
	which implies 
	\begin{equation}\label{iff1}
	\sum\nolimits_{j=1}^{N}C_{ij}\sin({\varphi}_j-{\varphi}_i)=\begin{cases}2\xi^1_i \sum_{j\in I_1\cap X_{\pi/2}}\xi^1_j, &\text{ if } i\in I_1\cap X_0;\\ -2\xi^1_i \sum_{j\in I_1\cap X_{\pi/2}}\xi^1_j, &\text{ if } i\in I_1\cap X_{\pi};\\ 2\xi^1_i \left( \sum_{j\in I_1\cap X_{\pi}}\xi^1_j-\sum_{j\in I_1\cap X_0}\xi^1_j \right), &\text{ if }i\in I_1\cap X_{\pi/2}. \end{cases}
	\end{equation}
	
	    Similarly, 
	for $i\in I_2$, we have that 
	\begin{equation}\label{iff2}
	\sum\nolimits_{j=1}^{N}C_{ij}\sin({\varphi}_j-{\varphi}_i)=\begin{cases}2\xi^1_i \sum_{j\in I_2\cap X_{\pi/2}}\xi^1_j, &\text{ if } i\in I_2\cap X_0;\\ -2\xi^1_i \sum_{j\in I_2\cap X_{\pi/2}}\xi^1_j, &\text{ if } i\in I_2\cap X_{\pi};\\ 2\xi^1_i \left( \sum_{j\in I_2\cap X_{\pi}}\xi^1_j-\sum_{j\in I_2\cap X_0}\xi^1_j \right), &\text{ if } i\in I_2\cap X_{\pi/2}. \end{cases}
	\end{equation}
	The desired results are then  obtained.
\end{proof}

It is in general  difficult to verify the conditions in Proposition \ref{prop}. But there exists one trivial family of ternary-pattern points which are always the critical points, as shown below.

\begin{corollary}\label{cor3}
Let $\{\boldsymbol{\xi}^1,\boldsymbol{\xi}^2\}\subset\{-1,1\}^N$ be two memories in system \eqref{mod} and the sets $\{I_{i}\}_{i=1,2}$ are defined in \eqref{index}. Then, each  ternary-pattern point \(\widehat{\boldsymbol{\varphi}}=(\widehat{\varphi}_1,\widehat{\varphi}_2,\dots,\widehat{\varphi}_N)^{\top}\in \{0,\pi/2,\pi\}^N\)   satisfying 
\begin{equation}\label{mid1}
\widehat{\varphi}_i=\begin{cases}0 \mbox{ or } \pi, &\text{ if } i\in I_1;\\ \pi/2, &\text{ if } i\in I_2, \end{cases}\end{equation}
or   \begin{equation}\label{mid2}
\widehat{\varphi}_i=\begin{cases}\pi/2, &\text{ if }i\in I_1;\\ 0 \mbox{ or } \pi, &\text{ if } i\in I_2,  \end{cases}
\end{equation}
is a critical point of system \eqref{mod} for any $\varepsilon$. 

In particular,  the midpoints, \((\boldsymbol{\varphi}^{*}(\boldsymbol{\xi}^1)+\boldsymbol{\varphi}^{*}(\boldsymbol{\pm \xi}^2))/2\), of  $\boldsymbol{\varphi}^{*}(\boldsymbol{\xi}^1)$ and $\boldsymbol{\varphi}^{*}(\pm\boldsymbol{\xi}^2)$, with $\boldsymbol{\varphi}^*=\arccos$, satisfy the condition  \eqref{mid1} or \eqref{mid2}, respectively, 
So, they are critical points of system \eqref{mod}.
\end{corollary}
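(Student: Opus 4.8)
The plan is to invoke Proposition \ref{prop} directly and verify that any point satisfying \eqref{mid1} or \eqref{mid2} fulfills all four conditions listed in \eqref{X123}. Since the four conditions there are stated for $\varepsilon\in\mathbb{R}$ (the second-order term already vanishes on ternary points, which is exactly why Proposition \ref{prop} is $\varepsilon$-independent), establishing them yields the critical-point property ``for any $\varepsilon$'' at once. The key observation is that each of the two defining conditions forces the coordinate partition $X_0,X_\pi,X_{\pi/2}$ to align with the index partition $I_1,I_2$ from \eqref{index} so cleanly that every line of \eqref{X123} either has a false premise or reduces to an empty sum.

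First I would treat \eqref{mid1}. Here every $i\in I_1$ has $\widehat{\varphi}_i\in\{0,\pi\}$, so $I_1\subseteq X_0\cup X_\pi$ and hence $I_1\cap X_{\pi/2}=\emptyset$; meanwhile every $i\in I_2$ has $\widehat{\varphi}_i=\pi/2$, so $I_2\subseteq X_{\pi/2}$ and $I_2\cap(X_0\cup X_\pi)=\emptyset$. Running through the four lines of \eqref{X123}: the first sum ranges over $I_1\cap X_{\pi/2}=\emptyset$ and is therefore zero; the second line's premise $I_2\cap(X_0\cup X_\pi)\neq\emptyset$ fails; the third line's premise $I_1\cap X_{\pi/2}\neq\emptyset$ fails; and in the fourth line both sums range over $I_2\cap X_0=\emptyset$ and $I_2\cap X_\pi=\emptyset$, so both equal zero. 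All four conditions hold, so $\widehat{\boldsymbol{\varphi}}$ is a critical point. The case \eqref{mid2} follows by the identical argument with the roles of $I_1$ and $I_2$ interchanged (the pairs of conditions 1–2 and 3–4 in \eqref{X123} simply swap).

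For the midpoint claim, I would compute components explicitly using $\varphi^*_i(\boldsymbol{\eta})=\arccos\eta_i\in\{0,\pi\}$ together with $\arccos(-\eta_i)=\pi-\arccos\eta_i$. For the midpoint of $\boldsymbol{\varphi}^*(\boldsymbol{\xi}^1)$ and $\boldsymbol{\varphi}^*(\boldsymbol{\xi}^2)$: on $I_1$, where $\xi^1_i=\xi^2_i$, the two $\arccos$ values coincide, so their average is $0$ or $\pi$; on $I_2$, where $\xi^1_i=-\xi^2_i$, one value is $0$ and the other is $\pi$, giving average $\pi/2$. This is precisely \eqref{mid1}. For the midpoint of $\boldsymbol{\varphi}^*(\boldsymbol{\xi}^1)$ and $\boldsymbol{\varphi}^*(-\boldsymbol{\xi}^2)$, the sign flip exchanges the two cases, producing $\pi/2$ on $I_1$ and $0$ or $\pi$ on $I_2$, which is \eqref{mid2}.

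There is no real obstacle here; the corollary is a direct specialization of Proposition \ref{prop}. The only point needing care is the bookkeeping of the conditional premises in \eqref{X123}: one must notice that each condition is imposed only when its stated nonemptiness premise holds, so that the alignment of $X_{\pi/2}$ with exactly one of $I_1,I_2$ makes every nontrivial sum empty and every nonvacuous premise false.
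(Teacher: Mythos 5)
Your proposal is correct and follows essentially the same route as the paper: both reduce the claim to Proposition \ref{prop} by observing that under \eqref{mid1} one has $I_1=X_0\cup X_\pi$ and $I_2=X_{\pi/2}$, so every condition in \eqref{X123} is either vacuous or an empty sum, with \eqref{mid2} handled by the symmetric swap and the midpoint claim by direct componentwise computation. Your write-up is in fact slightly more explicit than the paper's (which simply declares the four conditions ``null and fulfilled'' and the midpoint check ``straightforward''), but there is no substantive difference in approach.
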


\begin{proof}
\eqref{mid1} means that  $I_2=X_{\pi/2}$ and  $I_1=X_{0}\cup X_{\pi}$. So,  $I_1\cap X_{\pi/2}=I_2\cap (X_0\cup X_{\pi})=\emptyset$. So four conditions  in 
\eqref{X123} are null and fulfilled.  By Proposition \ref{prop},  \eqref{mid1} is a critical point.
The proof for    \eqref{mid2}   is  the same by replacing $\boldsymbol{\xi}^2$ with $-\boldsymbol{\xi}^2$ (or equivalently,
by replacing $\boldsymbol{\xi}^1$ with $-\boldsymbol{\xi}^1$) .  
The verification for the midpoint case is straightforward. 
\end{proof}

For convenience, the ternary-pattern critical points that satisfy the condition \eqref{mid1} or \eqref{mid2}  are called {\bf  memory-compatible ternary-pattern critical points} to highlight the dependence on the memorized binary-patterns via $I_1$ and $I_2$ (recall \eqref{index}).  
There are $2^{|I_1|}+2^{|I_2|}$
such  ternary-pattern critical points,  including the images due to the invariance $\boldsymbol{\varphi}\to \pi -\boldsymbol{\varphi}$.
We next study the spectrum of these 
memory-compatible ternary-pattern critical points in order to identify the possible index-1 saddle points. 
Since \eqref{mid1} and \eqref{mid2} are 
similar,   we only   study  \eqref{mid1}.

\subsection{Spectrum  and stability of memory-compatible ternary-pattern points}\label{subsec4.2}

We calculate  the spectrum of the Jacobian matrix $\boldsymbol{\mathcal{J}}_{\widehat{\boldsymbol{\varphi}}}$ at $\widehat{\boldsymbol{\varphi}}$ satisfying \eqref{mid1} in this subsection.
First, we need  the following  definitions of index partitions for $I_1=S_1\cup S_2$ and $I_2=S_3\cup S_4$ (recall \eqref{mid1} and \eqref{index}): 
\begin{align}\label{S1234}
\begin{split}
& S_1:=\left\{i\in [N] \mid \xi^1_i=\xi^2_i=1 \quad \text{and} \quad \widehat{\varphi}_i=0   \right\}\bigcup \left\{ i\in [N] \mid \xi^1_i=\xi^2_i=-1 \quad \text{and} \quad \widehat{\varphi}_i=\pi \right\},\\
& S_2:=\left\{i\in [N] \mid \xi^1_i=\xi^2_i=1 \quad \text{and} \quad \widehat{\varphi}_i=\pi   \right\}\bigcup \left\{ i\in [N] \mid \xi^1_i=\xi^2_i=-1 \quad \text{and} \quad \widehat{\varphi}_i=0 \right\},\\
& S_3:=\left\{i\in [N] \mid \xi^1_i=-\xi^2_i=1 \quad \text{and} \quad \widehat{\varphi}_i=\pi/2\right\},\\
&S_4:=\left\{ i\in [N] \mid \xi^1_i=-\xi^2_i=-1 \quad \text{and} \quad \widehat{\varphi}_i=\pi/2\right\}.
\end{split}
\end{align}

For the exploration of eigenvectors, 
we define the linear space $W(S)$ in $\mathbb{R}^N$ 
for   any nonempty set $S\subset [N]$  as follows:
\[ W(S):=\left\{ (x_1,x_2,\dots, x_N)^{\top}\in \mathbb{R}^N \mid x_j=0\; \text{for}\; j\notin S, \; \text{and}\; \sum\nolimits_{j\in S}x_j=0 \right\} .\]
The dimension of $W(S)$ is $|S|-1.$  For  $\{S_i\}_{i=1}^4$ defined above, $W(S_i)$ are then defined accordingly.
In addition, we define $W[\mathbbm{1}]:=\{c \mathbbm{1}\mid c \in\mathbb{R}\}$,
where $\mathbbm{1}$ represents the vector whose components are all $1$.  

Given any two non-empty sets $S, \tilde{S}\subset [N]$ with $S\cap \tilde{S}=\emptyset$, define the vector  $\boldsymbol{x}^{*}
(S;\tilde{S})=(x^*_1,x^*_2,\dots, x^*_N)^{\top}$ satisfying $\sum_{i=1}^{N}x^*_i=0$ as follows  
\[ 	x^*_i:=\begin{cases}|\tilde{S}|, &\text{ if } i\in S;\\ -|S|, & \text{ if } i\in \tilde{S}; \\ 0, & \text{otherwise}. \end{cases} \]
and   the one dimensional subspace 
\[W(S; \tilde{S}):=\left\{c\boldsymbol{x}^{*}(S;\tilde{S}) \mid c\in \mathbb{R}\right\}
\subset  \mathbb{R}^N,\]
which will provide  the subspaces  $W(S_1; S_2), W(S_3; S_4)$ and $W(I_1; I_2)$ respectively. 

Before proving Theorem \ref{thm-mid}, we first show a useful lemma.
\begin{lemma}
	Let $\{\boldsymbol{\xi}^1,\boldsymbol{\xi}^2\}\subset\{-1,1\}^N$ be two memories in system  \eqref{mod}. Let \(\widehat{\boldsymbol{\varphi}}=(\widehat{\varphi}_1,\widehat{\varphi}_2,\dots,\widehat{\varphi}_N)^{\top}\) be the memory-compatible ternary-pattern point  defined by  \eqref{mid1}. Then
	\begin{equation}\label{cij}
	C_{ij}\cos(\widehat{\varphi}_j-\widehat{\varphi}_i)=\begin{cases}(\xi^1_i+\xi^2_i)\cos\widehat{\varphi}_i, &\text{ if }j\in S_1;\\ -(\xi^1_i+\xi^2_i)\cos\widehat{\varphi}_i, &\text{ if }j\in S_2; \\ (\xi^1_i-\xi^2_i)\sin\widehat{\varphi}_i, &\text{ if }j\in S_3; \\ -(\xi^1_i-\xi^2_i)\sin\widehat{\varphi}_i, &\text{ if }j\in S_4, \end{cases} \quad \text{for each}\; i,j\in [N],
	\end{equation} and
	\begin{equation}\label{cos2}
	\cos 2(\widehat{\varphi}_j-\widehat{\varphi}_i)=\begin{cases}\cos 2\widehat{\varphi}_i, &\text{ if }j\in I_1;\\ -\cos 2\widehat{\varphi}_i, &\text{ if }j\in I_2, \end{cases} \quad \text{for each}\; i,j\in [N].
	\end{equation}
	For the Jacobian matrix $\boldsymbol{\mathcal{J}}_{\widehat{\boldsymbol{\varphi}}}$ and for any real vector $\boldsymbol{x}=(x_1,x_2,\dots,x_N)^{\top}$ with $\sum_{j=1}^{N}x_j=0$, 
	\begin{align}\label{Jaco3}
	(\boldsymbol{\mathcal{J}}_{\widehat{\boldsymbol{\varphi}}}\boldsymbol{x})_i&=\frac{1}{N}\left[ (\xi^1_i+\xi^2_i)\cos\widehat{\varphi}_i \left(\sum\nolimits_{j\in S_1}x_j-\sum\nolimits_{j\in S_2}x_j\right)+(\xi^1_i+\xi^2_i)\cos\widehat{\varphi}_i (|S_2|-|S_1|)x_i \right.\nonumber \\
	&\left.\quad +(\xi^1_i-\xi^2_i)\sin\widehat{\varphi}_i \left(\sum\nolimits_{j\in S_3}x_j-\sum\nolimits_{j\in S_4}x_j\right)+(\xi^1_i-\xi^2_i)\sin\widehat{\varphi}_i (|S_4|-|S_3|)x_i \right]\nonumber\\
	&\quad +\frac{2\varepsilon}{N}\cos2\widehat{\varphi}_i\left[\sum\nolimits_{j\in I_1}x_j- \sum\nolimits_{j\in I_2}x_j+(|I_2|-|I_1|)x_i\right],
	\end{align}
	where
	$(\boldsymbol{\mathcal{J}}_{\widehat{\boldsymbol{\varphi}}}\boldsymbol{x})_i$ represents the $i$-th component of $\boldsymbol{\mathcal{J}}_{\widehat{\boldsymbol{\varphi}}}\boldsymbol{x}$ for $i\in [N]$. 
\end{lemma}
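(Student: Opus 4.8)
The plan is to verify the three displayed identities in turn: the first two are pointwise trigonometric computations on a single pair of coordinates, and the third merely assembles them into the action of the Jacobian.

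First I would establish \eqref{cij}. Writing $C_{ij}=\xi^1_i\xi^1_j+\xi^2_i\xi^2_j$ and expanding $\cos(\widehat{\varphi}_j-\widehat{\varphi}_i)=\cos\widehat{\varphi}_j\cos\widehat{\varphi}_i+\sin\widehat{\varphi}_j\sin\widehat{\varphi}_i$, I would split into the four cases $j\in S_1,S_2,S_3,S_4$ according to \eqref{S1234}. Each case reduces to a one-line check using the defining sign conventions of these sets. For $j\in S_1$ one has $\widehat{\varphi}_j\in\{0,\pi\}$ (so $\sin\widehat{\varphi}_j=0$) and $\xi^1_j=\xi^2_j=\cos\widehat{\varphi}_j=\pm1$, whence $\xi^1_j\cos\widehat{\varphi}_j=\xi^2_j\cos\widehat{\varphi}_j=1$ and the product collapses to $(\xi^1_i+\xi^2_i)\cos\widehat{\varphi}_i$; for $j\in S_2$ the identical computation gives $\xi^1_j\cos\widehat{\varphi}_j=-1$, flipping the sign; for $j\in S_3,S_4$ one has $\widehat{\varphi}_j=\pi/2$ (so $\cos\widehat{\varphi}_j=0$, $\sin\widehat{\varphi}_j=1$) and $\xi^1_j=-\xi^2_j=\pm1$, leaving $\pm(\xi^1_i-\xi^2_i)\sin\widehat{\varphi}_i$. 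Next, \eqref{cos2} is immediate from $\cos2(\widehat{\varphi}_j-\widehat{\varphi}_i)=\cos2\widehat{\varphi}_j\cos2\widehat{\varphi}_i+\sin2\widehat{\varphi}_j\sin2\widehat{\varphi}_i$: for $j\in I_1$ one has $2\widehat{\varphi}_j\in\{0,2\pi\}$, giving $\cos2\widehat{\varphi}_j=1$ and $\sin2\widehat{\varphi}_j=0$, while for $j\in I_2$ one has $2\widehat{\varphi}_j=\pi$, giving $\cos2\widehat{\varphi}_j=-1$ and $\sin2\widehat{\varphi}_j=0$; in either case the $\sin2\widehat{\varphi}_i$ contribution vanishes.

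Finally I would assemble \eqref{Jaco3}. The key structural observation is that the Jacobian has zero row sums (this is the translation invariance $\boldsymbol{\mathcal{J}}_{\widehat{\boldsymbol{\varphi}}}\mathbbm{1}=0$ recorded in Section \ref{sec2}), so that $(\boldsymbol{\mathcal{J}}_{\widehat{\boldsymbol{\varphi}}}\boldsymbol{x})_i=\sum_{j\neq i}\mathcal{J}_{ij}(x_j-x_i)$; the diagonal entry never needs to be computed directly, and the $j=i$ term may be included for free since it contributes $0$. Substituting the off-diagonal formula $\mathcal{J}_{ij}=\frac{1}{N}C_{ij}\cos(\widehat{\varphi}_j-\widehat{\varphi}_i)+\frac{2\varepsilon}{N}\cos2(\widehat{\varphi}_j-\widehat{\varphi}_i)$ and inserting \eqref{cij} and \eqref{cos2}, the coefficient of $x_j$ is constant across each of the sets $S_1,S_2,S_3,S_4$ for the first-order part and across each of $I_1,I_2$ for the second-order part. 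Grouping $\sum_{j\neq i}\mathcal{J}_{ij}x_j$ into the block sums $\sum_{j\in S_k}x_j$ and $\sum_{j\in I_k}x_j$, and collecting the $-x_i$ contributions into cardinality factors $|S_k|$ and $|I_k|$, reproduces exactly the three-line expression in \eqref{Jaco3}, with $(\xi^1_i+\xi^2_i)\cos\widehat{\varphi}_i$, $(\xi^1_i-\xi^2_i)\sin\widehat{\varphi}_i$ and $\cos2\widehat{\varphi}_i$ as the three common factors.

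The computation is entirely elementary; the only place demanding care is the sign bookkeeping in the four-way case split for \eqref{cij}, where one must consistently track whether $\cos\widehat{\varphi}_j$ equals $+\xi^1_j$ or $-\xi^1_j$ on $I_1$ and whether $\xi^1_j=+1$ or $-1$ on $I_2$, as fixed by \eqref{S1234}. I would also remark that the hypothesis $\sum_{j}x_j=0$ is not in fact used in deriving \eqref{Jaco3} — it is inherited automatically from the zero-row-sum property — but it is retained because the subsequent spectral analysis takes place on the invariant subspace orthogonal to $\mathbbm{1}$. No genuine obstacle arises.
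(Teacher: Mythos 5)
Your proposal is correct and follows essentially the same route as the paper: a four-way case check of \eqref{cij} using the sign conventions in \eqref{S1234}, the immediate verification of \eqref{cos2}, and the reformulation $(\boldsymbol{\mathcal{J}}_{\widehat{\boldsymbol{\varphi}}}\boldsymbol{x})_i=\sum_{j}\mathcal{J}_{ij}(x_j-x_i)$ coming from the zero row sums of the Jacobian, followed by grouping the sums over $S_1,\dots,S_4$ and $I_1,I_2$. Your side remark that the hypothesis $\sum_j x_j=0$ is not actually needed for the identity \eqref{Jaco3} is accurate; the paper carries that hypothesis only because the subsequent spectral analysis is restricted to the complement of $W[\mathbbm{1}]$.
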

\begin{proof}
	Regarding \eqref{cij}, we see from \eqref{mid1} and the definitions of $\{S_i\}_{i=1}^{4}$ and $I_1,I_2$ that for $j\in S_1$, $\xi^1_j=\xi^2_j$, $\xi^1_j\cos\widehat{\varphi}_j=1$ and $\sin\widehat{\varphi}_j=0$. Then
	\[ 	C_{ij}\cos(\widehat{\varphi}_j-\widehat{\varphi}_i)=(\xi^1_i\xi^1_j+\xi^2_i\xi^2_j)\cos(\widehat{\varphi}_j-\widehat{\varphi}_i)=(\xi^1_i+\xi^2_i)\xi^1_j\cos(\widehat{\varphi}_j-\widehat{\varphi}_i)=(\xi^1_i+\xi^2_i)\cos\widehat{\varphi}_i.  \]
	For $j\in S_2, S_3, S_4$, the desired results can be obtained in a similar way. Equation \eqref{cos2} is obvious.
	
	    Regarding \eqref{Jaco3}, by $\eqref{Jaco1}$-$\eqref{Jaco2}$ and \(\sum_{j=1}^{N}x_j=0\), we observe that $\sum_{j=1,j\neq i}x_j=-x_i$ and 
	\begin{align*}
	(\boldsymbol{\mathcal{J}}_{\widehat{\boldsymbol{\varphi}}}\boldsymbol{x})_i&=\sum\nolimits_{\substack{j=1 \\ j \neq i}}^{N}\left(\frac{1}{N}C_{ij}\cos(\widehat{\varphi}_j-\widehat{\varphi}_i)+\frac{2\varepsilon}{N}\cos 2(\widehat{\varphi}_j-\widehat{\varphi}_i) \right)x_j\\
	&\quad -\left[ \sum\nolimits_{\substack{j=1 \\ j \neq i}}^{N} \frac{1}{N}C_{ij}\cos(\widehat{\varphi}_j-\widehat{\varphi}_i)+\frac{2\varepsilon}{N}\sum\nolimits_{\substack{j=1 \\ j \neq i}}^{N}\cos 2(\widehat{\varphi}_j-\widehat{\varphi}_i)\right]x_i\\
	&=\frac{1}{N}\sum\nolimits_{j=1}^{N}C_{ij}\cos(\widehat{\varphi}_j-\widehat{\varphi}_i)(x_j-x_i)+\frac{2\varepsilon}{N}\sum\nolimits_{\substack{j=1}}^{N}\cos 2(\widehat{\varphi}_j-\widehat{\varphi}_i)(x_j-x_i),\;\; \text{for}\; i\in [N].
	\end{align*}
	Using equations \eqref{cij}-\eqref{cos2}, we obtain that
	\begin{align*}
	(\boldsymbol{\mathcal{J}}_{\widehat{\boldsymbol{\varphi}}}\boldsymbol{x})_i&=\frac{1}{N}\left[\left(\sum\nolimits_{j\in S_1}+\sum\nolimits_{j\in S_2}+\sum\nolimits_{j\in S_3}+\sum\nolimits_{j\in S_4} \right)C_{ij}\cos(\widehat{\varphi}_j-\widehat{\varphi}_i)(x_j-x_i) \right]\\
	&\quad +\frac{2\varepsilon}{N}\left[\left(\sum\nolimits_{j\in I_1}-\sum\nolimits_{j\in I_2}\right)(x_j-x_i)\cos 2\widehat{\varphi}_i  \right]       \\
	&=\frac{1}{N}\left[(\xi^1_i+\xi^2_i)\cos\widehat{\varphi}_i\sum\nolimits_{j\in S_1}(x_j-x_i)- (\xi^1_i+\xi^2_i)\cos\widehat{\varphi}_i\sum\nolimits_{j\in S_2}(x_j-x_i)\right.\\
	&\quad+ \left. (\xi^1_i-\xi^2_i)\sin\widehat{\varphi}_i\sum\nolimits_{j\in S_3}(x_j-x_i)- (\xi^1_i-\xi^2_i)\sin\widehat{\varphi}_i\sum\nolimits_{j\in S_4}(x_j-x_i)
	\right]\\
	&\quad +\frac{2\varepsilon}{N}\cos 2\widehat{\varphi}_i \left[\sum\nolimits_{j\in I_1}x_j-\sum\nolimits_{j\in I_2} x_j -|I_1|x_i+|I_2|x_i\right] \\
	&=\frac{1}{N}\left[(\xi^1_i+\xi^2_i)\cos\widehat{\varphi}_i\left( \sum\nolimits_{j\in S_1}x_j-\sum\nolimits_{j\in S_2}x_j\right)+(\xi^1_i+\xi^2_i)\cos\widehat{\varphi}_i\left( |S_2|-|S_1|\right)x_i\right.\\
	&\quad +\left. (\xi^1_i-\xi^2_i)\sin\widehat{\varphi}_i\left( \sum\nolimits_{j\in S_3}x_j-\sum\nolimits_{j\in S_4}x_j\right)+(\xi^1_i-\xi^2_i)\sin\widehat{\varphi}_i\left( |S_4|-|S_3|\right)x_i \right]\\
	&\quad +\frac{2\varepsilon}{N}\cos2\widehat{\varphi}_i\left[\sum\nolimits_{j\in I_1}x_j- \sum\nolimits_{j\in I_2}x_j+(|I_2|-|I_1|)x_i\right],\;\; \text{for}\; i\in [N].
	\end{align*}
	The proof of equation \eqref{Jaco3} has been accomplished. 
\end{proof}

\begin{theorem}\textbf{(Spectrum of memory-compatible ternary-pattern points)}\label{thm-mid}
Let $\{\boldsymbol{\xi}^1,\boldsymbol{\xi}^2\}\subset\{-1,1\}^N$ be two memories in system  \eqref{mod} and $\varepsilon\in\mathbb{R}$. Let \(\widehat{\boldsymbol{\varphi}}=(\widehat{\varphi}_1,\widehat{\varphi}_2,\dots,\widehat{\varphi}_N)^{\top}\) 
be a memory-compatible ternary-pattern point satisfying  \eqref{mid1}. Then for the Jacobian matrix $\boldsymbol{\mathcal{J}}_{\widehat{\boldsymbol{\varphi}}}$,
\begin{enumerate}
	\item[$(1)$]  $0$ is a single eigenvalue with the eigenspace $W[\mathbbm{1}]$;
	\item[$(2)$] $2\varepsilon$ is a single eigenvalue with the eigenspace $W(I_1; I_2)$;
	\item[$(3)$] if $S_1\neq \emptyset$, then $\frac{2(-|S_1|+|S_2|)}{N}+\frac{2\varepsilon(|I_2|-|I_1|)}{N}$ is an  eigenvalue with the multiplicity $|S_1|-1$, corresponding to the eigenspace $W(S_1)$;
	\item[$(4)$] if $S_2\neq \emptyset$, then $\frac{2(|S_1|-|S_2|)}{N}+\frac{2\varepsilon(|I_2|-|I_1|)}{N}$ is an  eigenvalue with the multiplicity $|S_2|-1$, corresponding to the eigenspace $W(S_2)$;
	\item[$(5)$] if $S_1\neq \emptyset$ and $S_2 \neq \emptyset$, then $\frac{2|I_1|}{N}+\frac{2\varepsilon(|I_2|-|I_1|)}{N}$ is a single eigenvalue with the eigenspace $W(S_1; S_2)$;
	\item[$(6)$] if $S_3\neq \emptyset$, then $\frac{2(-|S_3|+|S_4|)}{N}+\frac{2\varepsilon(|I_1|-|I_2|)}{N}$ is an eigenvalue with the multiplicity $|S_3|-1$, corresponding to the eigenspace $W(S_3)$;
	\item[$(7)$] if $S_4\neq \emptyset$, then  $\frac{2(|S_3|-|S_4|)}{N}+\frac{2\varepsilon(|I_1|-|I_2|)}{N}$ is an eigenvalue with the multiplicity $|S_4|-1$, corresponding to the eigenspace $W(S_4)$;
	\item[$(8)$] if $S_3\neq \emptyset$ and $S_4 \neq \emptyset$, then $\frac{2|I_2|}{N}+\frac{2\varepsilon(|I_1|-|I_2|)}{N}$  is a single eigenvalue with the eigenspace $W(S_3; S_4)$.
\end{enumerate}
In $(3)$-$(4)$ and $(6)$-$(7)$,   the zero multiplicity (when $|S_{1}|=1$ or $|S_{2}|=1$ or $|S_{3}|=1$ or $|S_{4}|=1$) and the empty set (when $S_{1}=\emptyset$ or $S_{2}=\emptyset$ or $S_{3}=\emptyset$ or $S_{4}=\emptyset$)  mean the corresponding eigenvalue does not exist.

Obviously, the memory-compatible ternary-pattern point  satisfying  \eqref{mid1} must be unstable if $\varepsilon>0$ because its Jacobian has a positive eigenvalue $2\varepsilon$.
\end{theorem}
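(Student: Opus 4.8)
The plan is to exploit the symmetry of the Jacobian $\boldsymbol{\mathcal{J}}_{\widehat{\boldsymbol{\varphi}}}$: since it is a real symmetric matrix, it suffices to exhibit $N$ linearly independent eigenvectors together with their eigenvalues, and these will automatically exhaust the whole spectrum. The eigenvectors I would use are organized by the block partition $[N]=S_1\cup S_2\cup S_3\cup S_4$ from \eqref{S1234}, split into three families: the trivial consensus direction $\mathbbm{1}$; the \emph{internal-fluctuation} modes $W(S_1),W(S_2),W(S_3),W(S_4)$ of vectors supported on a single block and summing to zero there; and the \emph{block-constant} modes $W(S_1;S_2),W(S_3;S_4),W(I_1;I_2)$. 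The workhorse throughout is the action formula \eqref{Jaco3}, which is valid for every vector $\boldsymbol{x}$ with $\sum_j x_j=0$.

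First I would dispose of the easy items. The direction $\mathbbm{1}$ gives $0$ by the translation invariance recorded after \eqref{Jaco2}, yielding item $(1)$; all remaining candidate eigenvectors lie in the sum-zero hyperplane, so \eqref{Jaco3} applies. The key simplification is that on each block the scalar prefactors collapse: $(\xi^1_i+\xi^2_i)\cos\widehat{\varphi}_i=+2$ on $S_1$ and $-2$ on $S_2$, $(\xi^1_i-\xi^2_i)\sin\widehat{\varphi}_i=+2$ on $S_3$ and $-2$ on $S_4$, while $\cos 2\widehat{\varphi}_i=+1$ on $I_1$ and $-1$ on $I_2$. For an internal-fluctuation mode, say $\boldsymbol{x}\in W(S_1)$, all the block sums $\sum_{S_1}x_j,\dots,\sum_{I_2}x_j$ vanish, so \eqref{Jaco3} collapses to its diagonal ($x_i$) terms; these give the stated eigenvalue $\tfrac{2(-|S_1|+|S_2|)}{N}+\tfrac{2\varepsilon(|I_2|-|I_1|)}{N}$ for $i\in S_1$ and $0$ for $i\notin S_1$ (where $x_i=0$). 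This verifies items $(3),(4),(6),(7)$ at once.

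The substantive computation is the block-constant family, items $(2),(5),(8)$, where the within-block sums no longer vanish and must be tracked across all four blocks. Substituting $\boldsymbol{x}^{*}(S_1;S_2)$, $\boldsymbol{x}^{*}(S_3;S_4)$, and $\boldsymbol{x}^{*}(I_1;I_2)$ into \eqref{Jaco3}, I would simplify using the cardinality identities $|S_1|+|S_2|=|I_1|$, $|S_3|+|S_4|=|I_2|$, and $|I_1|+|I_2|=N$. The delicate point is the mode $W(I_1;I_2)$: the coupling (order-one) contribution cancels identically, and the $\varepsilon$-term factors as $\tfrac{2\varepsilon}{N}\,|I_2|\,(|I_1|+|I_2|)=2\varepsilon\,|I_2|$ on $I_1$, which divided by the entry $x_i=|I_2|$ yields exactly $2\varepsilon$; this clean value hinges on $|I_1|+|I_2|=N$, and is the main obstacle in the sense that it is where sign and bookkeeping errors are easiest to make. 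Items $(5)$ and $(8)$ follow by the same substitution and collapse.

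Finally I would check completeness and the instability remark. The four $W(S_i)$ have pairwise disjoint supports, each is orthogonal to every block-constant vector (a sum-zero-within-block vector against a constant-within-block vector gives $0$), the three block-constant generators are linearly independent (a $3\times 4$ coefficient system forcing the coefficients to vanish once $I_1,I_2\neq\emptyset$), and $\mathbbm{1}$ is the unique direction outside the sum-zero hyperplane. The dimensions total $1+\sum_{i=1}^4(|S_i|-1)+3=N$, so the listed eigenvectors form a basis of $\mathbb{R}^N$ and the eight families exhaust the spectrum. The closing claim is then immediate: by item $(2)$ the eigenvalue $2\varepsilon$ is positive whenever $\varepsilon>0$, so the Morse index is at least one and $\widehat{\boldsymbol{\varphi}}$ cannot be a local minimum.
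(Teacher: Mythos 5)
Your proposal is correct and follows essentially the same route as the paper's proof: both verify, via the action formula \eqref{Jaco3} and the block prefactor identities, that each of the subspaces $W[\mathbbm{1}]$, $W(S_i)$, $W(S_1;S_2)$, $W(S_3;S_4)$, $W(I_1;I_2)$ consists of eigenvectors with the stated eigenvalues, and then conclude by the dimension count $1+\sum_i(|S_i|-1)+3=N$. Your explicit linear-independence/orthogonality check of the families is a slightly more careful rendering of a step the paper leaves implicit, but the argument is the same.
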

\begin{proof}
By $\sum_{i=1}^{4}|S_i|=|I_1|+|I_2|=N$ and $\dim W[\mathbbm{1}]=\dim W(I_1; I_2)=\dim W(S_1; S_2)=\dim W(S_3; S_4)=1,$ we find that the dimensions
of those subspaces stated in $(1)$-$(8)$ sum to $N$. In fact, the sum of dimensions of spaces in
$(3)$-$(5)$ is $|I_1|-1$, and the sum of dimensions of spaces in $(6)$-$(8)$ is $|I_2|-1$. Therefore, to
verify the results in this theorem, it suffices to show that any nonzero vector in each space is
an eigenvector of the corresponding eigenvalue.

$(1)$ The relation $\boldsymbol{\mathcal{J}}_{\widehat{\boldsymbol{\varphi}}}\mathbbm{1}=0\mathbbm{1}$ is obvious, due to the global phase shift invariance. 

$(2)$ It suffices to show $\boldsymbol{\mathcal{J}}_{\widehat{\boldsymbol{\varphi}}}\boldsymbol{x}^*=2\varepsilon \boldsymbol{x}^*$ where $\boldsymbol{x}^*=\boldsymbol{x}^*(I_1; I_2)\in W(I_1;I_2)$. By \eqref{Jaco3}, the desired result is equivalent to 
\begin{align}
& (\xi^1_i+\xi^2_i)\cos\widehat{\varphi}_i \left(\sum\nolimits_{j\in S_1}x_j^*-\sum\nolimits_{j\in S_2}x_j^*\right)+(\xi^1_i+\xi^2_i)\cos\widehat{\varphi}_i (|S_2|-|S_1|)x_i^*\label{equi1} \\
&\quad +(\xi^1_i-\xi^2_i)\sin\widehat{\varphi}_i \left(\sum\nolimits_{j\in S_3}x_j^*-\sum\nolimits_{j\in S_4}x_j^*\right)+(\xi^1_i-\xi^2_i)\sin\widehat{\varphi}_i (|S_4|-|S_3|)x_i^*\equiv 0,\; \text{for}\;\; i\in [N],\nonumber
\end{align} and 
\begin{align}\label{equi2}
\frac{2\varepsilon}{N}\cos 2\widehat{\varphi}_i\left[ \sum\nolimits_{j\in I_1}x^*_j-\sum\nolimits_{j\in I_2}x^*_j+(|I_2|-|I_1|)x^*_i \right]=2\varepsilon x^*_i,\; \text{for}\;\; i\in [N].
\end{align}
In fact, based on the definitions of $\boldsymbol{x}^*(I_1; I_2)$ and $\{S_i\}_{i=1}^{4}$, if $i\in I_1$, then we obtain that
\[\sum\nolimits_{j\in S_1}x_j^*=|I_2| |S_1|, \quad \sum\nolimits_{j\in S_2}x_j^*=|I_2| |S_2|, \quad x^*_i=|I_2|\quad \text{and}\quad \xi^1_i=\xi^2_i. \]
Substituting the above into equation \eqref{equi1}, the equation \eqref{equi1} holds.
Similarly, it can be shown that equation \eqref{equi1} holds when $i\in I_2$.
Regarding \eqref{equi2}, if $i\in I_1$, then $\cos 2\widehat{\varphi}_i=1$, $x^*_i=|I_2|$, and $\sum\nolimits_{j\in I_1}x^*_j-\sum\nolimits_{j\in I_2}x^*_j=2|I_1||I_2|$. Hence equation \eqref{equi2} holds for $i\in I_1$. With a similar proof, we see that equation \eqref{equi2} also holds for $i\in I_2$.

	$(3)$ For any $\boldsymbol{x}=(x_1,x_2,\dots,x_N)^{\top}\in W(S_1)$, we need to prove that 
\[ (\boldsymbol{\mathcal{J}}_{\widehat{\boldsymbol{\varphi}}}\boldsymbol{x})_i=0\; \;\text{for}\; i\notin S_1\quad \text{and}\quad  (\boldsymbol{\mathcal{J}}_{\widehat{\boldsymbol{\varphi}}}\boldsymbol{x})_i=\left[ \frac{2(-|S_1|+|S_2|)}{N}+\frac{2\varepsilon(|I_2|-|I_1|)}{N}\right]x_i\; \;\text{for}\; i\in S_1.  \]
Using the definition of $W(S_1)$, we obtain that $\sum_{j\in S_1}x_j=0$ and $x_j=0$ for $j\notin S_1$, that is,
\[ \sum\nolimits_{j\in S_1}x_j=\sum\nolimits_{j\in S_2}x_j=\sum\nolimits_{j\in S_3}x_j=\sum\nolimits_{j\in S_4}x_j=0\quad \text{and}\quad \sum\nolimits_{j\in I_1}x_j-\sum\nolimits_{j\in I_2}x_j=0. \]
From equation \eqref{Jaco3}, we see that for $i\in [N]$,
\begin{align*}
(\boldsymbol{\mathcal{J}}_{\widehat{\boldsymbol{\varphi}}}\boldsymbol{x})_i&=\frac{1}{N}\left[(\xi^1_i+\xi^2_i)\cos\widehat{\varphi}_i (|S_2|-|S_1|)x_i+(\xi^1_i-\xi^2_i)\sin\widehat{\varphi}_i (|S_4|-|S_3|)x_i \right]\\
&\quad +\frac{2\varepsilon}{N} (|I_2|-|I_1|)\cos 2\widehat{\varphi}_i\ x_i.
\end{align*}

If $i\notin S_1$, then by $x_i=0$ we have that $(\boldsymbol{\mathcal{J}}_{\widehat{\boldsymbol{\varphi}}}\boldsymbol{x})_i=0$. If $i\in S_1$, then $\xi^1_i=\xi^2_i$ and $\xi^1_i\cos\widehat{\varphi}_i=1$ and $\cos 2\widehat{\varphi}_i=1$. So, 
$(\boldsymbol{\mathcal{J}}_{\widehat{\boldsymbol{\varphi}}}\boldsymbol{x})_i=\frac{2(|S_2|-|S_1|)x_i}{N}+\frac{2\varepsilon (|I_2|-|I_1|)}{N} x_i. $

$(4)$ The proof is similar to that of $(3)$, and thus it is omitted here.

$(5)$ It suffices to show $(\boldsymbol{\mathcal{J}}_{\widehat{\boldsymbol{\varphi}}}\boldsymbol{x}^*)_i=\left[\frac{2|I_1|}{N}+\frac{2\varepsilon(|I_2|-|I_1|)}{N} \right]x^*_i$ for $i\in [N]$ where $\boldsymbol{x}^*=\boldsymbol{x}^*(S_1; S_2)\in W(S_1;S_2)$. This is equivalent to 
\begin{align}
(\boldsymbol{\mathcal{J}}_{\widehat{\boldsymbol{\varphi}}}\boldsymbol{x}^*)_i&=\left[\frac{2|I_1|}{N}+\frac{2\varepsilon(|I_2|-|I_1|)}{N} \right]|S_2|, \;\; \text{for}\;\; i\in S_1, \label{equ11}\\
(\boldsymbol{\mathcal{J}}_{\widehat{\boldsymbol{\varphi}}}\boldsymbol{x}^*)_i&=-\left[\frac{2|I_1|}{N}+\frac{2\varepsilon(|I_2|-|I_1|)}{N} \right]|S_1|, \;\; \text{for}\;\; i\in S_2,  \label{equ12}\\
(\boldsymbol{\mathcal{J}}_{\widehat{\boldsymbol{\varphi}}}\boldsymbol{x}^*)_i&=0,\;\; \text{for}\;\; i\in I_2.\label{equ13}
\end{align}
Note that $S_1\cup S_2=I_1$ and $I_1\cup I_2=[N]$. Using the definition of $\boldsymbol{x}^*(S_1; S_2)$, we have that
\[\sum_{j\in S_1}x^*_j=|S_1||S_2|,\quad \sum_{j\in S_2}x^*_j=-|S_1||S_2|,\quad \sum_{j\in S_3}x^*_j=\sum_{j\in S_4}x^*_j=0 \quad \text{and}\quad \sum_{j\in I_1}x^*_j=\sum_{j\in I_2}x^*_j=0, \]
and $x^*_j=0$ for $j\in I_2$. Then by equation \eqref{Jaco3}, we obtain that
\begin{align*}
(\boldsymbol{\mathcal{J}}_{\widehat{\boldsymbol{\varphi}}}\boldsymbol{x}^*)_i&=\frac{1}{N}\left[ 2(\xi^1_i+\xi^2_i)\cos\widehat{\varphi}_i |S_1||S_2|+(\xi^1_i+\xi^2_i)\cos\widehat{\varphi}_i (|S_2|-|S_1|)x_i^* \right.\\
&\left.\quad +(\xi^1_i-\xi^2_i)\sin\widehat{\varphi}_i (|S_4|-|S_3|)x_i^* \right]+\frac{2\varepsilon}{N} (|I_2|-|I_1|)\cos 2\widehat{\varphi}_i\ x_i^*, \;\; \text{for}\; i\in [N].
\end{align*}

If $i\in S_1$, then $\xi^1_i=\xi^2_i$ and $\xi^1_i\cos\widehat{\varphi}_i=1$ and $x^*_i=|S_2|$ and $\cos 2\widehat{\varphi}_i=1$. Equation \eqref{equ11} is obtained. If $i\in S_2$, then $\xi^1_i=\xi^2_i$ and $\xi^1_i\cos\widehat{\varphi}_i=-1$ and $x^*_i=-|S_1|$ and $\cos 2\widehat{\varphi}_i=1$. Equation \eqref{equ12} holds.  If $i\in I_2$, then $\xi^1_i=-\xi^2_i$ and $x^*_i=0$ and $\cos 2\widehat{\varphi}_i=0$, which indicates equation \eqref{equ13}.

$(6)$-$(8)$ Analogous to the analysis of $(3)$-$(5)$, the proof is omitted here.

	\end{proof}

For the sake of simplicity, we summarize the eigenvalues  in Theorem \ref{thm-mid}:
\begin{align}
&\underbrace{\frac{2(-|S_1|+|S_2|)}{N}+\frac{2\varepsilon(|I_2|-|I_1|)}{N}}_{|S_{1}|-1,\,\text{if}\,S_{1}\neq \emptyset},\quad \underbrace{\frac{2(|S_1|-|S_2|)}{N}+\frac{2\varepsilon(|I_2|-|I_1|)}{N}}_{|S_{2}|-1,\,\text{if}\,S_{2}\neq \emptyset},\quad \underbrace{\frac{2|I_1|}{N}+\frac{2\varepsilon(|I_2|-|I_1|)}{N}}_{1,\,\text{if}\,S_{1}\neq \emptyset\,\text{and}\,S_{2} \neq \emptyset}, \label{mid-eig1}\\
& \underbrace{\frac{2(-|S_3|+|S_4|)}{N}+\frac{2\varepsilon(|I_1|-|I_2|)}{N}}_{|S_{3}|-1,\,\text{if}\,S_{3}\neq \emptyset},\quad \underbrace{\frac{2(|S_3|-|S_4|)}{N}+\frac{2\varepsilon(|I_1|-|I_2|)}{N}}_{|S_{4}|-1,\,\text{if}\,S_{4}\neq \emptyset},\quad \underbrace{\frac{2|I_2|}{N}+\frac{2\varepsilon(|I_1|-|I_2|)}{N}}_{1,\,\text{if}\,S_{3}\neq \emptyset\,\text{and}\,S_{4} \neq \emptyset},\label{mid-eig2}\\
& 0,\;\; 2\varepsilon.\label{mid-eig3}
\end{align}

Next, we show that  the
number of index-$1$ saddle points
among the total $2^{|I_1|}$ memory-compatible ternary-pattern points is in fact only  four  for a small $\varphi$. 
The following theorem shows the sufficient and necessary condition 
for  a memory-compatible ternary-pattern point to be an index-1 saddle point.

\begin{theorem}\label{Morthm1}
Under the same assumption in Theorem \ref{thm-mid}, 
if $I_1,I_2$ and $\varepsilon$ satisfy one of the following,
\begin{itemize}
	\item  $|I_1|=|I_2|$
	(i.e., $\boldsymbol{\xi}^1$ and $\boldsymbol{\xi}^1$ are orthogonal) and $0<\varepsilon<+\infty$, 
	\item  $|I_1|\neq |I_2|$ and  $0<\varepsilon<\widehat{\varepsilon}:=
	\frac{\min\{|I_1|,|I_2|\}}
	{\big||I_1|-|I_2|\big|},$ 
\end{itemize} 
then the Morse index of a memory-compatible ternary pattern critical point $\widehat{\boldsymbol{\varphi}}$  satisfying \eqref{mid1}, is $1$,  if and only if 
\begin{equation} \label{eq:min4S}
\min\{|S_1|,|S_2|\}=\min\{|S_3|,|S_4|\}=0.
\end{equation}
\end{theorem}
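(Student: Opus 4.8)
The plan is to read everything off the explicit spectrum \eqref{mid-eig1}--\eqref{mid-eig3} produced by Theorem \ref{thm-mid} and reduce the claim to an elementary sign analysis of the six block eigenvalues. The starting observation is that the eigenvalue $2\varepsilon$ on the one-dimensional eigenspace $W(I_1;I_2)$ is strictly positive for every $\varepsilon>0$, so it always contributes exactly one to the Morse index (this is the instability already recorded after Theorem \ref{thm-mid}). Because Theorem \ref{thm-mid} exhibits mutually independent eigenspaces whose dimensions sum to $N$, counting positive eigenvalues with multiplicity is unambiguous even if two of the listed values coincide numerically. Hence the Morse index equals $1$ if and only if every remaining eigenvalue in \eqref{mid-eig1}--\eqref{mid-eig2} is nonpositive, and the whole theorem becomes a question of the signs of those expressions under the stated restriction on $\varepsilon$.

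First I would isolate the two ``block-coupling'' eigenvalues $\frac{2|I_1|}{N}+\frac{2\varepsilon(|I_2|-|I_1|)}{N}$ and $\frac{2|I_2|}{N}+\frac{2\varepsilon(|I_1|-|I_2|)}{N}$, which are present precisely when $S_1,S_2\neq\emptyset$ and when $S_3,S_4\neq\emptyset$ respectively. A short computation shows both are strictly positive under the hypotheses: when $|I_1|=|I_2|$ they reduce to $2|I_1|/N$ and $2|I_2|/N$, positive for all $\varepsilon>0$; when $|I_1|\neq|I_2|$ the one whose $\varepsilon$-coefficient is negative, say $\frac{2|I_2|}{N}-\frac{2\varepsilon(|I_2|-|I_1|)}{N}$ in the subcase $|I_1|<|I_2|$, remains positive as long as $\varepsilon<|I_2|/\big||I_1|-|I_2|\big|$, which is implied by $\varepsilon<\widehat{\varepsilon}$. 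This immediately yields the ``only if'' direction in contrapositive form: if $\min\{|S_1|,|S_2|\}\ge 1$ then $S_1$ and $S_2$ are both nonempty, the first block-coupling eigenvalue exists and is positive, and together with $2\varepsilon$ it forces the Morse index to be at least $2$; the case $\min\{|S_3|,|S_4|\}\ge 1$ is symmetric via the second block-coupling eigenvalue.

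For the ``if'' direction I would assume \eqref{eq:min4S}, so that within each block exactly one of the two sets is empty (neither $I_1$ nor $I_2$ is empty). Then both block-coupling eigenvalues are absent, and in the pair attached to $W(S_1),W(S_2)$ only the eigenvalue of the single nonempty set survives, likewise for the $S_3,S_4$ pair. Substituting, for instance, $|S_1|=0$, $|S_2|=|I_1|$ collapses the surviving eigenvalue to $\frac{2(|S_1|-|S_2|)}{N}+\frac{2\varepsilon(|I_2|-|I_1|)}{N}=-\frac{2|I_1|}{N}+\frac{2\varepsilon(|I_2|-|I_1|)}{N}$, and the same threshold $\varepsilon<\widehat{\varepsilon}$ now makes this strictly negative (trivially so when $|I_1|\ge|I_2|$, and by $\varepsilon<|I_1|/\big||I_1|-|I_2|\big|$ when $|I_1|<|I_2|$); the $S_3,S_4$ block is handled identically. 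Thus $2\varepsilon$ is the only positive eigenvalue and the Morse index is exactly $1$.

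I expect the main obstacle to be bookkeeping rather than anything conceptual: one must track the symmetric subcases ($|S_1|=0$ versus $|S_2|=0$, and likewise for $S_3,S_4$) together with the three regimes $|I_1|<|I_2|$, $|I_1|=|I_2|$, $|I_1|>|I_2|$, and confirm that the single threshold $\widehat{\varepsilon}$ is the binding constraint in every branch. The point worth emphasizing is that $\widehat{\varepsilon}$ is dictated by the requirement that the surviving \emph{within-block} eigenvalue be negative, which forces $\min\{|I_1|,|I_2|\}$ into the numerator; this is a strictly stronger demand than keeping the block-coupling eigenvalues positive, and is precisely why the same $\widehat{\varepsilon}$ governs both directions of the equivalence.
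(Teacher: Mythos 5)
Your proposal is correct and follows essentially the same route as the paper: both arguments read the Morse index directly off the spectrum \eqref{mid-eig1}--\eqref{mid-eig3} from Theorem \ref{thm-mid}, note that $2\varepsilon$ supplies the single guaranteed positive direction, and reduce the equivalence to a sign analysis of the remaining block eigenvalues under the stated bound on $\varepsilon$. Your explicit verification that $\widehat{\varepsilon}$ is exactly the threshold making the surviving within-block eigenvalue negative (and that this is the binding constraint, stronger than keeping the coupling eigenvalues positive) matches the paper's case-by-case computation.
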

\begin{proof}
$(1)$ For $|I_1|=|I_2|$ and $0<\varepsilon<+\infty$, \eqref{mid-eig1}-\eqref{mid-eig3} are reduced to
\begin{align}\label{mid-eig4}
\begin{split}
&\underbrace{\frac{2(-|S_1|+|S_2|)}{N}}_{|S_{1}|-1,\,\text{if}\,S_{1}\neq \emptyset},\quad \underbrace{\frac{2(|S_1|-|S_2|)}{N}}_{|S_{2}|-1,\,\text{if}\,S_{2}\neq \emptyset},\quad \underbrace{\frac{2|I_1|}{N}}_{1,\,\text{if}\,S_{1}\neq \emptyset\,\text{and}\,S_{2} \neq \emptyset}, \\
& \underbrace{\frac{2(-|S_3|+|S_4|)}{N}}_{|S_{3}|-1,\,\text{if}\,S_{3}\neq \emptyset},\quad \underbrace{\frac{2(|S_3|-|S_4|)}{N}}_{|S_{4}|-1,\,\text{if}\,S_{4}\neq \emptyset},\quad \underbrace{\frac{2|I_2|}{N}}_{1,\,\text{if}\,S_{3}\neq \emptyset\,\text{and}\,S_{4} \neq \emptyset},\;\; 0,\;\; 2\varepsilon.
\end{split}
\end{align}

If $\widehat{\boldsymbol{\varphi}}$ is of index-$1$, then in \eqref{mid-eig4}, the number of positive eigenvalues is $1$, which is \(2\varepsilon\), and the number of negative eigenvalues is $N-2$. 
Assume $\min\{|S_1|,|S_2|\}\neq 0$, then the positive eigenvalue $\frac{2|I_1|}{N}$ in \eqref{mid-eig4} must exist, which is impossible. Similarly,  $\min\{|S_3|,|S_4|\}\neq 0$ implies that the eigenvalue $\frac{2|I_2|}{N}$ must exist, which is also impossible. Therefore, $\min\{|S_1|,|S_2|\}=\min\{|S_3|,|S_4|\}=0$. 

If $\min\{|S_1|,|S_2|\}=\min\{|S_3|,|S_4|\}=0$, then in \eqref{mid-eig4}, there are no positive eigenvalues other than the positive eigenvalue \(2\varepsilon\), and the number of negative eigenvalues in \eqref{mid-eig4} is \(N - 2\). Therefore, $\widehat{\boldsymbol{\varphi}}$ is of index-$1$.

$(2)$ Given $|I_1|\neq |I_2|$ and  $0<\varepsilon<\min\{|I_1|,|I_2|\}/\big||I_1|-|I_2|\big|$.  If $\widehat{\boldsymbol{\varphi}}$ is of index-$1$, then there are no positive eigenvalues in  \eqref{mid-eig1}-\eqref{mid-eig2}. So, $\min\{|S_1|,|S_2|\}=\min\{|S_3|,|S_4|\}=0$. 

If $\min\{|S_1|,|S_2|\}=\min\{|S_3|,|S_4|\}=0$,   for the case where $\min\{|S_1|,|S_2|\}=0$, without loss of generality, assume $|S_1|=0$ and $|S_2|=|I_1|\ge 1$. By \eqref{mid-eig1}, the eigenvalues $\frac{2(-|S_1|+|S_2|)}{N}+\frac{2\varepsilon(|I_2|-|I_1|)}{N}$ and $\frac{2(|S_1|-|S_2|)}{N}+\frac{2\varepsilon(|I_2|-|I_1|)}{N}$ do not exist. Given $|S_1|=0$ and $|S_2|=|I_1|\ge 1$, and $0<\varepsilon<\min\{|I_1|,|I_2|\}/\big||I_1|-|I_2|\big|$, the negative eigenvalue $\frac{2|I_1|}{N}+\frac{2\varepsilon(|I_2|-|I_1|)}{N}$ exists with multiplicity $|S_2|-1$ \ (i.e., $|I_1|-1$) when $|S_2|\ge 2$; $\frac{2|I_1|}{N}+\frac{2\varepsilon(|I_2|-|I_1|)}{N}$ does not exist when $|S_2|=1$. 
Similarly, for $\min\{|S_3|,|S_4|\}=0$, the number of negative eigenvalues in \eqref{mid-eig2} is $|I_2|-1$, and no positive eigenvalues exists. Therefore, $\widehat{\boldsymbol{\varphi}}$ is index-$1$ saddle point of system \eqref{mod}.
\end{proof}

\begin{remark}
The condition $\min\{|S_3|, |S_4|\}=0$ in \eqref{eq:min4S} is  about the set  $I_2$ for the two  memorized binary pattern $\boldsymbol{\xi}^1$ and $\boldsymbol{\xi}^2$.
It means that   $\xi_i^1 \equiv$ the constant $1$  for all $i\in I_2$, or $\xi_i^1 \equiv$ the constant $-1$  for all $i\in I_2$. 
For such  memorized binary patterns, 
the other  condition $ \min\{|S_1|,|S_2|\}=0$ in   \eqref{eq:min4S}     gives only two index-1 saddle points satisfying \eqref{mid1} among 
memory-compatible ternary-pattern points, corresponding to $S_1=I_1$ (i.e., $|S_2|=0$) or $S_2=I_1$ (i.e., $|S_1|=0$), respectively,  with   the expressions    given by, 
\begin{equation}\label{posxi2}
\widehat{\varphi}_i:=
\begin{cases} 
0, & \mbox{ if } \xi_i^1=\xi_i^2=1;\\
\pi, & \mbox{ if } \xi_i^1=\xi_i^2=-1;\\
\pi/2, & \mbox{ if } \xi_i^1\ne \xi_i^2 ~(\mbox{i.e.,}~ i\in I_2).
\end{cases}
\quad \mbox { or } \quad 
\widehat{\varphi}_i:=
\begin{cases} 
\pi, & \mbox{ if } \xi_i^1=\xi_i^2=1;\\
0, & \mbox{ if } \xi_i^1=\xi_i^2=-1;\\
\pi/2, & \mbox{ if } \xi_i^1\ne \xi_i^2 ~(\mbox{i.e.,}~ i\in I_2).
\end{cases}   
\end{equation}
In fact, they are the midpoint  \(\boldsymbol{\varphi}_{1/2}=(\boldsymbol{\varphi}^{*}(\boldsymbol{\xi}^1)+\boldsymbol{\varphi}^{*}(\boldsymbol{\xi}^2))/2\)
and its image $\pi-\boldsymbol{\varphi}_{1/2}$.

There are similar results about the case \eqref{mid2}
for the memory-compatible ternary-pattern points, in parallel to  Theorem \ref{Morthm1} associated with the case \eqref{mid1}.  If   two memories $\boldsymbol{\xi}^1$ and $\boldsymbol{\xi}^2$ satisfies that $\xi_i^1 \equiv$   $1$  for all $i\in I_1$, or $\xi_i^1 \equiv$  $-1$ for all $i\in I_1$,
then  
there are only two index-1 saddle points satisfying \eqref{mid2}: 
\begin{equation}\label{negxi2}
\widehat{\varphi}_i:=
\begin{cases} 
0, & \mbox{ if }~ \xi_i^1=-\xi_i^2=1;\\
\pi, & \mbox{ if }~ \xi_i^1=-\xi_i^2=-1;\\
\pi/2, & \mbox{ if }~ \xi_i^1= \xi_i^2 ~(\mbox{i.e.,}~ i\in I_1) .
\end{cases}
\quad \mbox { or } \quad 
\widehat{\varphi}_i:=
\begin{cases} 
\pi, & \mbox{ if }~ \xi_i^1=-\xi_i^2=1;\\
0, & \mbox{ if }~ \xi_i^1=-\xi_i^2=-1;\\
\pi/2, & \mbox{ if } ~\xi_i^1=  \xi_i^2 ~(\mbox{i.e.,}~ i\in I_1).
\end{cases}  
\end{equation}
which are the midpoint  \(\boldsymbol{\varphi}_{1/2}=(\boldsymbol{\varphi}^{*}(\boldsymbol{\xi}^1)+\boldsymbol{\varphi}^{*}(\boldsymbol{-\xi}^2))/2\)
and its image $\pi-\boldsymbol{\varphi}_{1/2}$, where 
$\boldsymbol{\varphi}^*=\arccos$.

In summary,  there are at most  four index-1 saddle points among memory-compatible ternary-pattern points, corresponding to 
the midpoints of two memories; in addition, 
Since \eqref{eq:min4S} is sufficient and necessary, 
the existence of index-1 memory-compatible ternary critical points
also requires the condition for two memories $\boldsymbol{\xi}^1$,
$\boldsymbol{\xi}^2$.

\end{remark}

\section{Example}\label{sec5}
We show an example with $N=6$ for the system \eqref{mod} to 
illustrate the theoretic results we have obtained above.
The  two memorized binary patterns  are 
\[\boldsymbol{\xi}^1=( 1\ 1 \ 1 \ -1 \ -1 \ -1)^{\top}\quad \text{and} \quad \boldsymbol{\xi}^2=(1\ 1 \ 1 \ 1 \ 1 \ -1)^{\top}, \]
which are not  orthogonal.
$I_1=\{1,2,3,6\}$ and $I_2=\{4,5\}$  (see \eqref{index}). 
$S_1=I_1$ $S_2=\emptyset$, 
and  $S_4=I_2$, $S_3=\emptyset$ by \eqref{S1234}, then
\eqref{eq:min4S} in Theorem \ref{Morthm1}
holds.
$\varepsilon^*$ as defined in \eqref{epstar} equals to $1/3$ and  the upper bound $\widehat{\varepsilon}$ in Theorem \ref{Morthm1} is $1$.
We choose $\varepsilon=0.3 < \min\{\varepsilon^*,\widehat{\varepsilon}\}$.
Note that the Morse indices 
do not depend on the value of $\varepsilon$ as long as it is between $0$ and $1/3$.

In this example, we   focus on $3^6=729$ ternary-pattern points of system \eqref{mod}, which include the symmetric pair of $\boldsymbol{\varphi}$ and $\pi-\boldsymbol{\varphi}$. Firstly,
through numerical calculation of the right hand side of \eqref{mod},
the total ternary-pattern points consist of $614$ non-critical points and $115$ critical points.
These critical points are consistent with the prediction of 
Proposition \ref{prop}.
Among these $115$ ternary-pattern critical points, 
we summarize their Morse indices and  compute the energy levels below in details.
\begin{enumerate}
	\item[$(1)$] Binary-pattern critical points ($2^N=64$ points):  
	\begin{itemize}
		\item Index-$0$
		(stable): {\it Only the memorized binary-pattern critical points   are stable due to  Theorem \ref{lem2}. They are $4$ points: \(\pm \boldsymbol{\xi}^1\) and \(\pm\boldsymbol{\xi}^2\),  with the same  energy of $-3.7833$.
			They are shown (with their sign-inverted images) at the lowest level of Figure \ref{fig:example}.}

		\item Index-$1$: {\it There are $32$ points of index-$1$, of which $12$ points have  an energy  of $-1.1167$, $16$ points have an energy of $-1.7833$ and $4$ points have an energy of $-3.1167$. They can be found by checking the conditions in  Remark \ref{rem2}. These 16 points 
			(by neglecting  their sign-inverted images) are highlighted by the three dashed   boxes in Figure \ref{fig:example}.}
		
		\item Index-$2$: { There are  $28$ points of index-$2$, including $12$ points with an energy of $-0.4500$ and $16$ points with an energy of $-1.1167$. This is from Theorem \ref{thm1}.}
	\end{itemize}
	
	\item[$(2)$]  Memory-compatible ternary-pattern critical points ($2^{|I_1|}+2^{|I_2|}=20$ points):
	
	\begin {enumerate}
	\item[$(2.1)$]  satisfying  \eqref{mid1}  ($2^{|I_1|}=16$ points):
	\begin{itemize}
		\item Index-$1$: 
		{\it There are $2$ points of index-$1$, the midpoint $\boldsymbol{\varphi}_{1/2}=(0,0,0,\pi/2, \pi/2,\pi)$ of \(\varphi^*(\boldsymbol{\xi}^1)\) and \(\varphi^*(\boldsymbol{\xi}^2)\) and its image $\pi-\boldsymbol{\varphi}_{1/2}=(\pi,\pi,\pi,\pi/2, \pi/2,0)$, with an energy of $-3.3833$ for each.

			This result is from Theorem \ref{Morthm1}.
			This index-1 saddle point and its image  have the lowest energy among all saddle points and highlighted in the middle   box of   Figure \ref{fig:example}. This saddle point connects to the two memories by following its one dimensional unstable manifold. So, we can regard this index-1 saddle point as the {transition state}.}
		\item Index-$2$: There are $14$ points, including $8$ points with an energy of $-1.3833$ and $6$ points with an energy of $-0.7167$. This is from Theorem \ref{thm-mid}. 
	\end{itemize}
\end{enumerate}

\begin {enumerate}
\item[$(2.2)$]  satisfying  \eqref{mid2}  ($2^{|I_2|}=4$ points):
\begin{itemize}
	\item Index-$1$: non-existence,       because of  $\min\{|S_3|,|S_4|\}>0$ in \eqref{eq:min4S} and Theorem \ref{Morthm1}  after changing 
	$\boldsymbol{\xi}^2\to -\boldsymbol{\xi}^2$.
	\item Index-$2$:  There are $2$ points with an energy of $-1.3833$ for each.
	\item Index-$3$: There are $2$ points with an energy of $-0.7167$ for each.
\end{itemize}
\end{enumerate}

\item[$(3)$]  Other ternary-pattern critical points ($31$ points):   not covered by our theoretic results and we need to  numerically check their eigenvalues.
\begin{itemize}
\item Index-$1$: 
{\it There is only one index-1 point $\frac{\pi}{2}\mathbbm{1}$ with the same energy of $-1.7833$ as some index-1 binary-pattern points.  
	But this should  be regarded  equivalent to the binary-pattern index-1 saddle point $\pi\mathbbm{1}$ (the first one shown  in the right box of Figure \ref{fig:example}) by global phase shift of $\pi/2$.}
\item Index-$3$: There are $18$ points with an energy of $-0.7167$ for each.
\item Index-$4$: There are $12$ points with an energy of $-0.0500$ for each.
\end{itemize}

\end{enumerate}

In Figure \ref{fig:example}, we depict all  index-$0$ and index-$1$ saddle points, where the visual height is related to the energy level. 
The paths from saddle points to local minimum points are marked too.
The `Index' and `Energy' values at the top of each pattern represent the Morse index and the potential energy of the pattern, respectively. 
We adopt  the energy landscape viewpoint \cite{FW2012,Energylanscapes2024ReviewJCP} to interpret this figure. 
This figure demonstrates the transition dynamics between two stable memory patterns 
$\boldsymbol{\xi}^1$ and $\boldsymbol{\xi}^2$ (including their symmetric images $-\boldsymbol{\xi}^1$ and $-\boldsymbol{\xi}^2$). 
These index-1 saddle points lie at four distinct levels of potential energy, and the lowest energy $V=-3.3833$ corresponds to the midpoints of the two memorized binary patterns ($V=-3.7833$).   This special index-1 saddle point with the lowest energy connects the two memories via its unstable manifold, so 
it is surely the transition state between the stable memories, with the activation energy (the energy difference) $\Delta V=0.4$.  
The six index-1 saddle points with the highest energy at the top of Figure \ref{fig:example} also connect two memories but with a much higher energy barrier $\Delta V=2.6666$, suggesting a lower chance of escaping one of the two stable memories from its stability region via passing through these saddle points. The remaining index-1 saddle points only lie on the separatrix of one memory and its symmetric image, and are irrelevant to the transition between two memories.  
This  exploration of these index-1 saddle points, their energy level, and their unstable manifolds enhances our understanding of the phase space and dynamics of the Kuramoto model \eqref{mod}. The concept of transition states,  the most relevant critical point to transition between bi-stable states in the double-well potential, might have implications in biological meaning, when the brain is subject to random perturbation.

 \begin{figure}[htbp]
	\centering
	\includegraphics[width=1\textwidth]{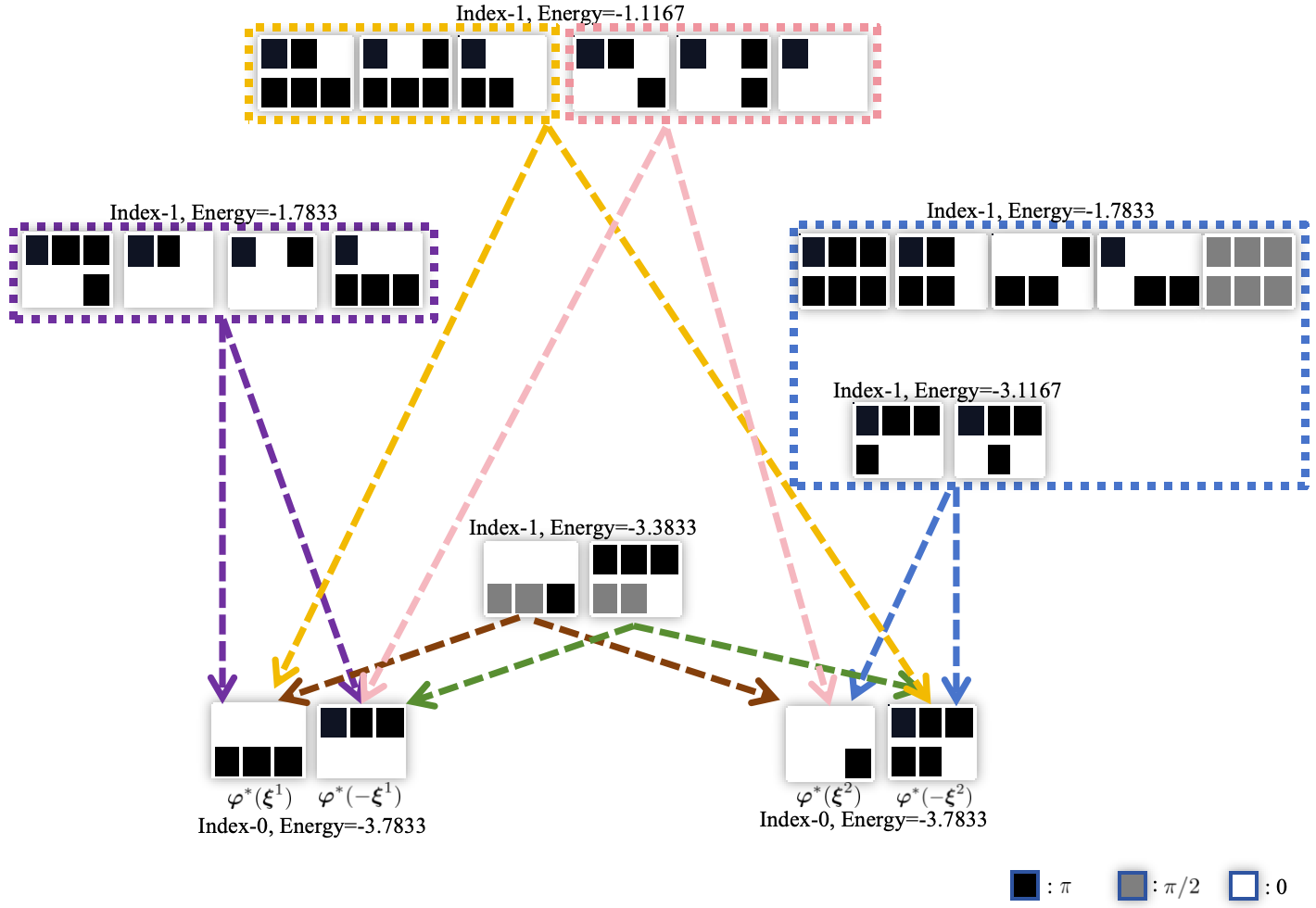} 
	\caption{The image depicts index-$0$ and index-$1$ critical points of ternary patterns. The `Index' and `Energy' values at the top represent the Morse index and potential energy, respectively. The arrows represent the unstable manifolds  originating from the index-1 saddle points, with the system's solution eventually converging to one of  local stable patterns.  Except the two stable points and the index-1 saddle point with lowest energy (energy=$-3.3833$),  the symmetric  images ($\boldsymbol{\varphi}\to \pi-\boldsymbol{\varphi}$) of other critical points are not shown.   }
	\label{fig:example} 
\end{figure}

\section{Conclusion}\label{secon}
In conclusion, this study comprehensively explores the Kuramoto model of two oscillators with  the second-order Fourier coupling and  the Hebbian learning rule. By detailed analysis of the critical points of binary patterns and the ternary patterns, we have obtained a relatively complete characterization    regarding their Morse index and particularly the index-1 saddle points. The determination of the spectra and Morse indices of these points offers a more comprehensive understanding of the system's dynamics, beyond the locally stable patterns.   Future research can build on these findings to further explore the behavior of the system under more complex conditions. For example, we can consider more complex coupling mechanisms or a larger number of oscillators. 

It is known that many dynamical systems based on  Hebbian rule can exhibit multiple meta-stable states, as shown in the  Kuramoto model with second-order Fourier coupling studied here. The associative memory relies on the partition of the phase space of the input signals into different basins of attraction of each memory. By incorporating the existing concepts of energy landscape and transitions prevailing in statistical physics\cite{FW2012}, theoretical chemistry\cite{Energylanscapes}, and biology\cite{WANGPNAS2011}, one may gain a deeper understanding of the underlying dynamical models beyond the linear stability regime and pursue  further investigation of non-equilibrium properties related to the storage and retrieval of the memory in brain science.
The example here shows how to describe the index-1 saddle points based on theoretical analysis for a particular dynamical system.  For more general and complex bistable systems, the numerical methods for index-1 saddle point search  \cite{GAD2011,String2002,SimGAD2018,Dimer1999,yin2019high} can play a major role in practice.


\vspace{0.2cm}


 \textbf{Data Accessibility.} This article has no additional data.

\textbf{Declaration of AI use.} {We have not used AI-assisted technologies in creating this article.}

\textbf{Authors' Contributions.} {X. Zhao: conceptualization, formal analysis, funding acquisition, investigation, methodology, writing-original draft; X. Zhou: conceptualization, methodology, supervision, validation, visualization, writing-review\&editing, funding acquisition.
	All authors gave their final approval for publication and agreed responsible for the work performed therein.}

\textbf{Competing Interests.} {We declare we have no competing interests.}

\textbf{Funding.} {This work is partially supported by Hong Kong RGC GRF grants  11318522 and 11308323. Zhao acknowledges the National Natural Science Foundation of China (Grant No. 12201156), the support provided by the Hong Kong Scholars Scheme (Grant No. XJ2023001) and the Fundamental Research Funds for the Central Universities.}


\bibliographystyle{plain}

\bibliography{ref}

\end{document}